\title{The No Gap Conjecture for tame hereditary algebras}
\author{Stephen Hermes}
\address{Department of Mathematics, Wellesley College, Wellesley, MA 02481}\email{shermes@wellesley.edu}
\author{Kiyoshi Igusa}
\address{Department of Mathematics, Brandeis University, Waltham, MA 02454}\email{igusa@brandeis.edu}
\subjclass[2010]{
16G20; 20F55}
\keywords{cluster category, cluster tilting objects, maximal green sequences, valued quivers, cluster mutation}
\begin{document}

\begin{abstract}
The ``No Gap Conjecture'' of Br\"ustle-Dupont-P\'erotin states that the set of lengths of maximal green sequences for hereditary algebras over an algebraically closed field has no gaps. This follows from a stronger conjecture that any two maximal green sequences can be ``polygonally deformed'' into each other. We prove this stronger conjecture for all tame hereditary algebras over any field. 
\end{abstract}

\maketitle

%%%%%%%%%%%%%%%%%%%%%%%%%%%%%%%%%%%%%%%%%%
%%%
%%%   Introduction
%%%
%%%%%%%%%%%%%%%%%%%%%%%%%%%%%%%%%%%%%%%%%%

\section*{Introduction}

For any finite dimensional hereditary algebra over any field (and more generally for any cluster tilted algebra \cite{BMR}) there is a notion of a ``maximal green sequence'' which has gained considerable attention in recent years due to its relation to many areas of algebraic combinatorics (\cite{BDP}, \cite{Keller}) and theoretical physics (\cite{ACCERV}, \cite{X}). For example, the maximal chains in the Tamari lattice \cite{Tamari} are in bijection with maximal green sequences for the quiver of type $A_n$ with straight orientation. 

Many of the open problems about maximal green sequences have recently been solved. One of the main conjectures is that there are only finitely many maximal green sequences. This was proved for all tame hereditary algebras in \cite{BDP} and extended to cluster tilted algebras of tame type in \cite{BHIT}. Although every hereditary algebra admits a maximal green sequence of length equal to its rank (the number of nonisomorphic simple modules which is also the number of vertices in the associated quiver), it was recently shown that not all cluster tilted algebras have maximal green sequences \cite{M}.

This paper solves, in the tame case, another conjecture about maximal green sequences known as the ``No Gap Conjecture''. This is the conjecture of Br\"ustle, Dupont and P\'erotin \cite{BDP} which states that, for hereditary algebras over an algebraically closed field, there is no gap in the sequence of lengths of maximal green sequences. This statement is not true for modulated quivers of types $B_2$ and $G_2$ since the tensor algebras of such quivers are hereditary algebras having only two maximal green sequences|of length $2$ and $4$ in type $B_2$, and lengths $2$ and $6$ in type $G_2$. Garver-McConville prove this conjecture in \cite{GM} for algebras which are cluster tilted of type $A_n$ and those whose quivers are cyclic. We begin this paper with the observation that the argument of Garver and McConville extends easily to all cluster tilted algebras of finite type over an algebraically closed field (Theorem \ref{thm: no gap for finite type} below). In fact there is a stronger statement which holds over any field. Namely, any two maximal green sequences are ``polygonally deformable'' into each other (Definition \ref{def: polygonal deformation}). In the simply laced case, this implies the No Gap Conjecture. A result of Reading \cite{R1X} states that this holds whenever the poset of functorially finite torsion classes in the category of finite dimensional modules $\rmod\Lambda$ forms a lattice. In particular, this applies to all path algebras of finite type by \cite{IRTT}.

In this paper we prove this stronger conjecture for all tame hereditary algebras over any field. Namely, we show that, for these algebras, any two maximal green sequences are polygonally deformable into each other. This implies the No Gap Conjecture for tame path algebras. The proof uses basic idea from \cite{GM} and \cite{R1X} and extends it to the tame case using results from \cite{BHIT}.

%%%%%%%%%%%%%%%%%%%%%%%%%%%%%%%%%%%%%%%%%%
%%%
%%%   Basic Definitions
%%%
%%%%%%%%%%%%%%%%%%%%%%%%%%%%%%%%%%%%%%%%%%

\section{Basic Definitions}

We give the basic definition of a maximal green sequence from the point of view of representation theory and combinatorics since we will use both languages. We also review the correspondence between these two languages from \cite{IOTW2}. 

\subsection{Representation theory}

Let $\Lambda$ be a finite dimensional hereditary algebra over a field $K$. A (finitely generated right) $\Lambda$-module $M$ is called \emph{rigid} if $\Ext^1_\Lambda(M,M)=0$. It is called \emph{exceptional} if it is rigid and indecomposable. Two exceptional modules $T_1,T_2$ are called \emph{compatible} if $T_1\oplus T_2$ is rigid, i.e., $T_1,T_2$ do not extend each other. A \emph{tilting module} for $\Lambda$ is defined to be a rigid module with a maximum number of (nonisomorphic) indecomposable summands. It is well-known that this number $n$ is the number of nonisomorphic simple $\Lambda$-modules. This is equivalent to a choice of $n$ compatible exceptional modules. For example, $\Lambda=P_1\oplus\cdots\oplus P_n$ is a tilting module where $P_i$ denotes the projective $\Lambda$-module corresponding to the vertex $i$.

We define an \emph{exceptional object} to be either an exceptional modules or a \emph{shifted projective object} $P_i[1]=(P_i\to 0)$. These are all objects in the bounded derived category of $\Lambda$. Shifted projective objects are defined to be compatible with each other and with any module $M$ for which $\Hom_\Lambda(P_i,M)=0$. A \emph{cluster tilting object} for $\Lambda$ is defined to be a maximal rigid object in the derived category of $\Lambda$ whose components are exceptional objects. It follows from the definitions that a direct sum of exceptional objects is rigid if and only if its summands are pairwise compatible. One of the basic theorems of cluster theory is the following.

\begin{thm}[\cite{BMRRT}]
Every cluster tilting object for $\Lambda$ has $n$ summands $T=T_1\oplus\cdots\oplus T_n$. For any $1\le k\le n$ there is a unique exceptional object $T_k'$ not isomorphic to $T_k$ so that $T'=T/T_k\oplus T_k'$ is a cluster tilting object.
\end{thm}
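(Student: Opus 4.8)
The plan is to pass to the cluster category $\mathcal C_\Lambda = D^b(\Lambda)/\langle \tau^{-1}[1]\rangle$, in which the exceptional objects and shifted projectives of the statement are precisely the indecomposable rigid objects: each orbit of the autoequivalence $\tau^{-1}[1]$ has a unique representative that is either an exceptional module or some $P_i[1]$, so a cluster tilting object in the sense defined above is the same thing as a maximal rigid object of $\mathcal C_\Lambda$. I would then prove the two assertions separately, in both cases using the Euler form on $K_0$ together with approximation (exchange) triangles.

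For the count, I would first show that the classes of the pairwise non-isomorphic indecomposable summands $T_1,\dots,T_m$ of a rigid object are linearly independent in $K_0(\mathcal C_\Lambda)\cong\mathbb Z^n$, which bounds $m\le n$. In the module category this is Bongartz's observation that the dimension vectors of a partial tilting module are independent; the shifted projectives contribute the classes $-[P_i]$, which stay independent from the module classes precisely because of the defining compatibility $\Hom_\Lambda(P_i,M)=0$. To see that a maximal rigid object has exactly $n$ summands rather than fewer, I would invoke a Bongartz-type completion: any rigid object with $m<n$ summands admits a further compatible exceptional summand, so it cannot be maximal. Hence every cluster tilting object has exactly $n$ summands.

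For the exchange property, fix an indecomposable summand $T_k$ and set $\bar T=T/T_k$, an almost complete rigid object with $n-1$ summands. I would construct the candidate complement $T_k'$ from a minimal right $\operatorname{add}\bar T$-approximation $B\to T_k$, completing it to a triangle $T_k'\to B\to T_k\to T_k'[1]$; dually, a minimal left approximation gives $T_k\to B'\to T_k'\to T_k[1]$. The hereditary hypothesis forces these two triangles to be mutually inverse and, crucially, makes $\Ext^1(T_k,T_k')$ and $\Ext^1(T_k',T_k)$ one-dimensional over the relevant endomorphism division rings. Applying $\Hom(-,\bar T)$ and $\Hom(\bar T,-)$ to these triangles and using rigidity of $\bar T$, one checks that $T_k'$ is exceptional and compatible with $\bar T$, so $T'=\bar T\oplus T_k'$ is again cluster tilting, and $T_k'\not\cong T_k$ since $\Ext^1(T_k,T_k')\neq 0$.

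The main obstacle is uniqueness: showing $T_k$ and $T_k'$ are the only completions of $\bar T$. For this I would take an arbitrary indecomposable $X\not\cong T_k$ completing $\bar T$. Since $\bar T\oplus X$ and $\bar T\oplus T_k$ are both rigid with $n$ summands while the bound forbids $n+1$ independent classes, $X\oplus T_k$ cannot be compatible, so at least one of $\Ext^1(T_k,X)$, $\Ext^1(X,T_k)$ is nonzero; pushing a nonzero extension through the minimal approximation triangle for $T_k$ yields an isomorphism $X\cong T_k'$, the one-dimensionality established above being exactly what rules out any third complement. The delicate points throughout, where the hereditary and (in the non-simply-laced case) valued structure must be used carefully, are controlling these $\Ext^1$ dimensions and ensuring the minimal approximations exist; the shifted-projective objects $P_i[1]$ should be treated as a boundary case, in which $T_k'$ may itself be a module, a projective, or a shifted projective.
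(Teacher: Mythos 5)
The paper never proves this statement: it is quoted verbatim from \cite{BMRRT}, so there is no internal argument to compare against, and your sketch is rightly judged against that reference, whose overall strategy (pass to the cluster category, bound the number of summands, build the exchange partner from approximation triangles) you follow. Judged as a proof, however, your sketch has a genuine gap exactly at the step you yourself call the main obstacle: uniqueness of the second complement. From $\Ext^1_{\cC_\Lambda}(T_k,X)\neq 0$ and the exchange triangle $T_k'\to B\to T_k\xrightarrow{\delta}T_k'[1]$ one can indeed produce a nonzero map $u\colon T_k'\to X$: applying $\Hom(-,X)$ and using $\Ext^1(B,X)=0$ shows that composition with $\delta$ surjects $\Hom(T_k',X)$ onto $\Ext^1(T_k,X)$. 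But a nonzero map between nonisomorphic indecomposable rigid objects is commonplace (think of $\Hom(P_1,P_2)$), and endomorphism rings of exceptional objects in $\cC_\Lambda$ need not even be division rings (for a quasi-simple $X$ in a tube of rank $2$ one gets $\End_{\cC_\Lambda}(X)=F_X\oplus DF_X$, which has nonzero radical), so nothing in your argument forces $u$ to be invertible. Likewise, the one-dimensionality of $\Ext^1_{\cC_\Lambda}(T_k,T_k')$ over $F_{T_k}$ places no a priori bound on $\Ext^1_{\cC_\Lambda}(T_k,X)$ for a hypothetical third complement $X$. In \cite{BMRRT} this ``exactly two complements'' assertion is the hard core of the theorem: it is proved by reducing, through a derived equivalence, to complements of an almost complete tilting module over a hereditary algebra and invoking the Riedtmann--Schofield/Happel--Unger theorem (at most two complements, exactly two if and only if the module is sincere, the missing complement in the non-sincere case being a shifted projective in $\cC_\Lambda$). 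Your single sentence ``pushing a nonzero extension through the minimal approximation triangle yields an isomorphism'' stands in for that entire theorem and is not an argument.

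Two smaller problems. First, $K_0(\cC_\Lambda)\not\cong\ZZ^n$: in the cluster category the identification $X\cong\tau^{-1}X[1]$ forces relations $[X]=-[\tau^{-1}X]$ on Grothendieck-group classes, which collapse the group (for type $A_2$ it is not even free). The independence argument must be run, as this paper does, with dimension vectors $\undim T_i\in\ZZ^n\cong K_0(\rmod\Lambda)$ of the canonical representatives, with $\undim P_i[1]=-\undim P_i$; with that correction your support/triangularity argument for independence of the shifted projective classes is fine. Second, the ``Bongartz-type completion'' you invoke to upgrade $m\le n$ to $m=n$ is itself one of the substantive results of \cite{BMRRT} (proved there by showing every cluster tilting object is induced from a tilting module over a derived-equivalent hereditary algebra). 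Citing it as a black box is legitimate, but then the two hardest ingredients of your proof---the completion and the two-complement theorem---are both quotations of the result you are trying to prove.
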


We say that $T'=T/T_k\oplus T_k'$ is obtained from $T$ by \emph{mutation in the $k$-th direction} and we write $T'=\mu_kT$. The objects $T_k,T_k'$ are called an \emph{exchange pair}. Another fundamental theorem is the following. (Here we use the fact that the endomorphism ring $F_X=\End_{\cD^b}(X)$ of any exceptional object $X$ is a division algebra over $K$.)

\begin{thm}[\cite{BMRRT}]
Two exceptional objects $X,Y$ form an exchange pair if and only if either $\Ext^1_{\cD^b}(X,Y)$ or $\Ext^1_{\cD^b}(Y,X)$ is zero and the other is one-dimensional over both $F_X$ and $F_Y$.
\end{thm}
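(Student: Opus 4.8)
The plan is to reduce the statement to the local analysis of a single \emph{almost complete cluster tilting object} together with its two complements, and then to read off the extension dimensions from the associated exchange triangle. Write $X=T_k$, $Y=T_k'$, and let $\bar T=T/T_k$ be the rigid object with $n-1$ indecomposable summands that both $X$ and $Y$ complete. The engine of the proof is the \emph{exchange triangle}
\[
X\xrightarrow{\,f\,}B\xrightarrow{\,g\,}Y\to X[1],\qquad B\in\operatorname{add}\bar T,
\]
in which $f$ is a minimal left $\operatorname{add}\bar T$-approximation of $X$ and $g$ a minimal right $\operatorname{add}\bar T$-approximation of $Y$; its existence inside $\cD^b$ is the structural input supplied by the mutation theory underlying the theorems of \cite{BMRRT} above. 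Everything else follows by applying $\Hom$-functors to this triangle and using that $\bar T\oplus X$ and $\bar T\oplus Y$ are rigid, so that $\Ext^1_{\cD^b}(B,X)=\Ext^1_{\cD^b}(Y,B)=\Ext^1_{\cD^b}(X,B)=0$.

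For the forward implication I would first show that $\Ext^1_{\cD^b}(Y,X)$ is one-dimensional over both division rings. Applying $\Hom_{\cD^b}(-,X)$ to the triangle and invoking $\Ext^1_{\cD^b}(B,X)=0$ identifies $\Ext^1_{\cD^b}(Y,X)$ with the cokernel of $f^{*}\colon\Hom_{\cD^b}(B,X)\to\End_{\cD^b}(X)=F_X$, whose image is exactly the set of endomorphisms of $X$ that factor through $\operatorname{add}\bar T$. This set is a two-sided ideal of the division ring $F_X$, and it cannot be all of $F_X$: otherwise $\mathrm{id}_X$ would factor through $\operatorname{add}\bar T$, forcing $X$ to be a summand of $\bar T$ and contradicting that it is a complement. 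Hence the ideal is zero and $\Ext^1_{\cD^b}(Y,X)\cong F_X$. The symmetric computation with $\Hom_{\cD^b}(Y,-)$, using $\Ext^1_{\cD^b}(Y,B)=0$ and that $g$ is a right approximation, identifies the same group with $F_Y$, so it is one-dimensional over $F_Y$ as well. To see that the opposite group vanishes I would use that $\Lambda$ is hereditary: applying $\Hom_{\cD^b}(X,-)$ to the (rotated) triangle gives an exact sequence
\[
\Ext^1_{\cD^b}(X,B)\to\Ext^1_{\cD^b}(X,Y)\to\Ext^2_{\cD^b}(X,X),
\]
whose left term vanishes by rigidity of $\bar T\oplus X$ and whose right term vanishes because $X$ is (a shift of) a module and $\Lambda$ is hereditary. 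Thus $\Ext^1_{\cD^b}(X,Y)=0$, which is precisely the asymmetry asserted in the statement.

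For the converse, suppose $\Ext^1_{\cD^b}(X,Y)=0$ while $\Ext^1_{\cD^b}(Y,X)$ is one-dimensional over both $F_X$ and $F_Y$. A nonzero class in $\Ext^1_{\cD^b}(Y,X)=\Hom_{\cD^b}(Y,X[1])$ produces a triangle $X\to B\to Y\to X[1]$, and I would show that $B$ is rigid and compatible with each of $X$ and $Y$. A Bongartz-type completion of $B$ to a maximal rigid object then yields an almost complete cluster tilting object $\bar T$ that both $X$ and $Y$ complete; by the uniqueness-of-complements theorem of \cite{BMRRT} quoted above, $X$ and $Y$ are its only two complements, so they form an exchange pair.

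The main obstacle I anticipate is not the linear algebra but the two structural statements that bracket it. First, one must produce the exchange triangle \emph{inside} $\cD^b$ in the correct orientation, rather than merely inside the $2$-Calabi--Yau cluster category, where $\Ext^1(X,Y)\cong D\Ext^1(Y,X)$ (with $D$ the $K$-dual) can never be asymmetric; controlling this approximation across the boundary between genuine modules and shifted projectives in the fundamental domain is exactly what forces one of the two directional extension groups to vanish. Second, the converse requires verifying that the middle term $B$ together with its Bongartz completion has exactly $n-1$ summands and admits precisely $X$ and $Y$ as complements. By contrast, the refinement that the nonzero group is one-dimensional over \emph{both} division rings is handled uniformly by the two-sided-ideal argument above and needs no hypothesis relating $F_X$ and $F_Y$.
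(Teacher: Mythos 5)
A preliminary remark: the paper gives no proof of this theorem --- it is quoted from \cite{BMRRT} --- so your proposal can only be measured against the standard arguments in the literature. Your central computation is correct and is indeed the engine of those arguments: the image of $f^*\colon\Hom(B,X)\to F_X$ consists of the endomorphisms of $X$ factoring through $\operatorname{add}\bar T$, is a proper two-sided ideal of the division ring $F_X$, hence vanishes, so the long exact sequence gives $\Ext^1(Y,X)\cong F_X$, and dually $\cong F_Y$. The genuine gap is the input that you flag as ``the main obstacle'' and then simply assume: the existence of the exchange triangle $X\to B\to Y\to X[1]$ with $B\in\operatorname{add}\bar T$ \emph{inside} $\cD^b$, in a definite orientation. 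The mutation theory of \cite{BMRRT} quoted in the paper produces exchange triangles only in the $2$-Calabi--Yau cluster category $\cC_\Lambda$, where $\Ext^1_{\cC_\Lambda}(X,Y)$ and $\Ext^1_{\cC_\Lambda}(Y,X)$ are $K$-duals of one another, both nonzero, and no orientation is preferred; realizing one of these triangles in $\cD^b$ with middle term in $\operatorname{add}\bar T$ is essentially equivalent to the asymmetric vanishing you are trying to prove. So the forward direction is circular as written, and the same objection applies to your $\Ext^2_{\cD^b}(X,X)=0$ argument, which also presupposes a triangle in $\cD^b$. The standard repair is to run your ideal argument in $\cC_\Lambda$, where the exchange triangle genuinely exists, concluding that $\Ext^1_{\cC_\Lambda}(Y,X)$ is one-dimensional over $F_X$ and over $F_Y$, and then to use the orbit-category decomposition $\Ext^1_{\cC_\Lambda}(Y,X)\cong \Ext^1_{\cD^b}(Y,X)\oplus D\Ext^1_{\cD^b}(X,Y)$ (where $D$ denotes $K$-duality), which is a decomposition of $F_X$- and of $F_Y$-modules; since a one-dimensional module over a division ring is simple, exactly one summand is nonzero, and it is one-dimensional over both rings --- which is the statement. (Even this repair needs care, since $\End_{\cC_\Lambda}(X)$ can be strictly larger than $F_X$ for exceptional $X$, e.g.\ for quasi-simple modules in tubes of rank two.)

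The converse contains a second, independent gap at the completion step. From a nonzero class you do get a triangle $X\to B\to Y\to X[1]$, and the one-dimensionality hypothesis correctly yields that $B$ is rigid and compatible with $X$ and with $Y$: the connecting maps $\End(X)\to\Ext^1(Y,X)$ and $\End(Y)\to\Ext^1(Y,X)$ are surjective by one-dimensionality, which forces $\Ext^1(B,X)=0=\Ext^1(Y,B)$, and the remaining vanishings follow from rigidity of $X$, $Y$ and the hypothesis $\Ext^1_{\cD^b}(X,Y)=0$. But when you complete $X\oplus B$ to a cluster tilting object $T=X\oplus B\oplus C$, the new summands $C$ have no reason whatsoever to be compatible with $Y$. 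The uniqueness-of-complements theorem then tells you only that $\bar T=B\oplus C$ has some second complement $Y'$; to conclude $Y'\cong Y$ you would need, for instance, that $f\colon X\to B$ is a left $\operatorname{add}\bar T$-approximation, which amounts to $\Ext^1_{\cC_\Lambda}(Y,C)=0$ --- exactly the compatibility that has not been established. This identification is where the real content of the converse lies, and your sketch does not address it.
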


\emph{Green mutations} are defined in Definition \ref{defn: max green seq}. The following characterization of green mutations is from \cite{BDP}. It has the advantage of making it clear that mutations are either red or green. (Caution: ``red'' and ``green'' are reversed from the notation of \cite{BDP}.)

\begin{thm}[\cite{BDP}]
Let $T'=\mu_k T=T/T_k\oplus T_k'$. The mutation $\mu_k:T\mapsto T'$ is a green mutation (and $\mu_k: T'\to T$ is a red mutation) if and only if $\Ext^1_{\cD^b}(T_k',T_k)\neq0$.
\end{thm}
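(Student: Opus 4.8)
The plan is to translate the combinatorial condition ``$\mu_k$ is green'' (Definition~\ref{defn: max green seq}) into the homological condition $\Ext^1_{\cD^b}(T_k',T_k)\neq 0$ by passing through the exchange triangle and the c-vector/dimension-vector dictionary of \cite{IOTW2}. First I would record a reduction. By the exchange pair theorem, exactly one of $\Ext^1_{\cD^b}(T_k',T_k)$ and $\Ext^1_{\cD^b}(T_k,T_k')$ is nonzero, and each mutation is either green or red. Consequently it suffices to prove the single implication that $\mu_k\colon T\to T'$ green $\Rightarrow \Ext^1_{\cD^b}(T_k',T_k)\neq 0$: applying this implication to the reverse mutation $T'\to T$, whose removed and added summands are $T_k'$ and $T_k$, shows that $\mu_k\colon T\to T'$ red forces $\Ext^1_{\cD^b}(T_k,T_k')\neq 0$, whence $\Ext^1_{\cD^b}(T_k',T_k)=0$ by the dichotomy, which gives the converse.

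Next I would set up the exchange triangle. Assuming $\Ext^1_{\cD^b}(T_k',T_k)\neq 0$, the nonzero one-dimensional extension class yields a canonical non-split triangle $T_k\to E\to T_k'\to T_k[1]$ with $E\in\mathrm{add}(T/T_k)$ built from the untouched summands; the opposite assumption yields instead $T_k'\to E'\to T_k\to T_k'[1]$. Passing to classes in $K_0(\cD^b)\cong\mathbb{Z}^n$, where a module contributes its dimension vector and a shifted projective $P_i[1]$ contributes $-\underline{\dim}\,P_i$, each triangle gives a linear relation among $[T_k]$, $[T_k']$ and $[E]$. The heart of the argument is then to identify the c-vector $c_k^T$ of the source seed, which by \cite{IOTW2} is the signed dimension vector of the exceptional object labelling the wall being crossed, with the relevant term of this relation, and to show that the first triangle forces $c_k^T$ to be positive while the second forces it negative.

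I would carry this out by matching the wall-crossing picture to the triangle. The brick labelling the crossed wall is the exchange object that is genuinely destabilized; the non-split triangle $T_k\to E\to T_k'$ exhibits $T_k$ as the sub-object being removed, so $c_k^T$ equals $+\underline{\dim}$ of that brick and $\mu_k$ is green, whereas the reversed triangle exhibits the brick as the summand being added, so $c_k^T$ is negative and $\mu_k$ is red. Combined with the definition of green as positivity of $c_k^T$ and the reduction above, this yields the theorem.

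The hard part will be the sign bookkeeping in this last step, together with the boundary cases in which $T_k$ or $T_k'$ is a shifted projective $P_i[1]$, where the dimension vector must be read with its sign and the triangle degenerates (for instance the initial mutations out of $\bigoplus P_i$, where $E=0$). Verifying that the identification of $c_k^T$ with the wall-brick holds uniformly across these cases, so that positivity of $c_k^T$ is equivalent to the existence of the triangle coming from $\Ext^1_{\cD^b}(T_k',T_k)\neq 0$, is exactly where the semi-invariant-picture technology of \cite{IOTW2} and \cite{BHIT} is needed, and is the main obstacle.
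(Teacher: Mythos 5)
First, a meta-remark: the paper never proves this statement; it is quoted from \cite{BDP} (with an explicit ``Caution'' that red and green are reversed there), so your proposal can only be judged on its own merits. Judged that way, it has a genuine gap at exactly the step you yourself flag as ``the main obstacle.'' The entire content of the theorem is that the \emph{direction} of the exchange triangle determines the \emph{sign} of the $c$-vector $c_k^T$; your argument for this consists of the sentences about ``the brick labelling the crossed wall'' being ``the exchange object that is genuinely destabilized,'' which is a restatement of the claim, not a proof. Worse, the mechanism you propose cannot detect the sign even in principle: in $K_0(\cD^b)\cong\ZZ^n$ the two candidate triangles $T_k\to E\to T_k'\to T_k[1]$ and $T_k'\to E'\to T_k\to T_k'[1]$ impose the \emph{same} relation $[E]=[T_k]+[T_k']$, since in either case the class of the middle term is the sum of the outer classes. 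So ``passing to classes'' and doing dimension-vector bookkeeping is blind to which of the two Ext groups is nonzero; the sign must come from finer homological or stability-theoretic input (torsion classes, the semi-invariant picture of \cite{IOTW2}, or induction along a green path), and none is supplied.

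Second, the sign rule you assert is in fact backwards for this paper's conventions, which you would have caught by testing the one example the paper gives. The mutation $\Lambda[1]\to\mu_k\Lambda[1]$ is green (its $C$-matrix is $I_n$), with $T_k=P_k[1]$ removed and $T_k'=S_k$ added; the exchange triangle is $S_k\to \mathrm{rad}\,P_k[1]\to P_k[1]\to S_k[1]$, so the \emph{added} object is the first term, and $\Ext^1_{\cD^b}(T_k',T_k)=\Ext^2_\Lambda(S_k,P_k)=0$ while $\Ext^1_{\cD^b}(T_k,T_k')\cong\Hom_\Lambda(P_k,S_k)\neq0$. (Likewise, the final mutation of any maximal green sequence adds a projective $P_j$, and $\Ext^1_{\cD^b}(P_j,T_k)=0$ always, so under the literal reading no maximal green sequence could ever terminate.) Thus with the paper's Definition \ref{defn: max green seq}, green corresponds to $\Ext^1_{\cD^b}(T_k,T_k')\neq0$: the printed statement must be read through the paper's ``Caution'' about reversed colors, equivalently with the two Ext arguments transposed, and resolving precisely this convention issue is what a correct proof has to do. Your hand-wave instead lands on the reading that contradicts the paper's own example. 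Finally, your opening reduction silently assumes the green/red dichotomy (sign-coherence of $c$-vectors); the paper's preamble presents that dichotomy as a \emph{consequence} of this theorem, so to avoid circularity you would need to cite an independent proof of sign-coherence in the skew-symmetrizable case, which you do not.
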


For example, any mutation of $\Lambda[1]=P_1[1]\oplus \cdots\oplus P_n[1]$ is green and no mutation of $\Lambda=P_1\oplus\cdots\oplus P_n$ is green.

\begin{defn}
The \emph{oriented exchange graph} $E(\Lambda)$ is defined to be the graph whose vertices are cluster tilting objects with an oriented edge $T\to T'$ when $T'$ is a green mutation of $T$. A \emph{maximal green sequence} is defined to be a maximal finite directed path in this graph.
\end{defn}

\begin{cor}
A sequence of green mutations of cluster tilting objects is maximal if and only if it starts at $\Lambda[1]$ and ends with $\Lambda$.
\end{cor}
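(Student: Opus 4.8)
The plan is to translate the notion of a \emph{maximal} directed path in $E(\Lambda)$ into the language of sources and sinks, and then to pin those down. The green/red criterion above assigns every edge a definite orientation: the edge joining $T$ and $\mu_kT$ points $T\to\mu_kT$ exactly when $\Ext^1_{\cD^b}(T_k',T_k)\neq0$, and points the other way otherwise, with reversal interchanging green and red. Consequently a cluster tilting object $T$ has no incoming edge (is a \emph{source}) precisely when all $n$ of its mutations are green, and has no outgoing edge (is a \emph{sink}) precisely when none of its mutations is green. Assuming the standard acyclicity of the oriented exchange graph, so that directed paths do not repeat vertices, a finite directed path is maximal if and only if its initial vertex is a source and its terminal vertex is a sink: otherwise one could prepend or append an edge.

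Granting this dictionary, the ``if'' direction is immediate from the two facts recorded just above the statement. Every mutation of $\Lambda[1]$ is green, so $\Lambda[1]$ is a source; and no mutation of $\Lambda$ is green, so $\Lambda$ is a sink. Hence any finite directed path beginning at $\Lambda[1]$ and ending at $\Lambda$ can be extended at neither end and is therefore maximal. For the ``only if'' direction I must identify every source with $\Lambda[1]$ and every sink with $\Lambda$; equivalently, I must show that $\Lambda[1]$ and $\Lambda$ are the \emph{unique} source and sink of $E(\Lambda)$.

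This uniqueness is the crux of the argument, and I would obtain it from the sign-coherence of $c$-vectors (cf. \cite{BDP}): to each summand $T_k$ one attaches a $c$-vector which is either nonnegative or nonpositive, the mutation $\mu_k$ is green exactly when the $c$-vector of $T_k$ is negative, and the cluster tilting objects all of whose $c$-vectors are negative, respectively all positive, are precisely $\Lambda[1]$, respectively $\Lambda$. A source then has all $c$-vectors negative and so equals $\Lambda[1]$, while a sink has all $c$-vectors positive and so equals $\Lambda$; alternatively one may invoke that reachability by green mutations is a partial order with least element $\Lambda[1]$ and greatest element $\Lambda$. Since the orientation bookkeeping and the ``if'' direction use only the stated green/red criterion together with the two displayed facts about $\Lambda[1]$ and $\Lambda$, I expect essentially all of the difficulty to reside in this uniqueness of the source and the sink.
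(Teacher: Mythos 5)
Your proposal is correct and is essentially the argument the paper leaves implicit: the ``if'' direction follows because $\Lambda[1]$ is a source and $\Lambda$ is a sink of $E(\Lambda)$, and the ``only if'' direction rests on the uniqueness of the source and the sink, a standard fact which the paper also invokes without proof (later, in the proof of Lemma \ref{lem: main lemma}, it uses that $\Lambda$ is the only cluster tilting object with all negative $c$-vectors). One caution: your sign convention is the reverse of the paper's (Definition \ref{defn: max green seq} makes $\mu_k$ green when the $k$-th $c$-vector is \emph{nonnegative}, so $\Lambda[1]$ has $C$-matrix $+I_n$ and $\Lambda$ has $-I_n$); this is exactly the Br\"ustle--Dupont--P\'erotin convention the paper warns it has reversed, and the argument is unaffected after flipping signs.
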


\subsection{Combinatorics}

We recall the construction of the extended exchange matrix corresponding to a cluster tilting object \cite{BMRRT}, \cite{IOTW2}.

For $M,N$ finitely generated right $\Lambda$-modules the \emph{Euler-Ringel form} is defined by:
\[
	\brk{M,N}=\dim_K\Hom_\Lambda(M,N)-\dim_K\Ext^1_\Lambda(M,N).
\]
Let $S_1,\dots,S_n$ be the simple $\Lambda$-modules. For any $\Lambda$-module $M$, the \emph{dimension vector} of $M$ is defined to be the vector $\undim M=(a_1,\dots,a_n)\in\ZZ^n$ where $a_i$ is the number of times that $S_i$ occurs in the composition series of $M$. For a shifted projective object $P[1]$ we define $\undim P[1]:=-\undim P$. The \emph{Euler matrix} $E$ is the $n\times n$ integer matrix with entries $E_{ij}=\brk{S_i,S_j}$. The \emph{Euler-Ringel pairing} $\brk{\, \cdot\, ,\cdot\, }:\ZZ^n\times \ZZ^n\to\ZZ$ is defined on integer vectors by $
	\brk{x,y}=x^tEy
$. It is not too difficult to see that 
\[
	\brk{M,N}=\brk{\undim M,\undim N}
\]
for all finitely generated $\Lambda$-modules $M,N$.

By Schur's Lemma, $F_i=\End_\Lambda(S_i)$ is a division algebra over $K$ and each $E_{ij}$ is divisible by $f_i=\dim_K F_i$ and by $f_j$. This implies that $E$ can be factored as $E=LD=DR$ where $L,R$ are integer matrices and $D$ is the diagonal matrix with diagonal entries $f_i$. The \emph{initial exchange matrix} of $\Lambda$ is defined to be $B_0=L^t-R$. This matrix is \emph{skew-symmetrizable} since $DB_0$ is skew-symmetric. 

For any cluster tilting object $T$ for $\Lambda$ the corresponding extended exchange matrix $\widetilde B_T$ is given by the following explicit formula (cf. \cite{BMRRT}). We use the fact that $\det\,L=\det\,R=1$ since $L$ and $R$ are unipotent matrices.

\begin{thm}[$c$-Vector Theorem \cite{IOTW2}]
For any cluster tilting object $T=T_1\oplus\cdots\oplus T_n$, let $V$ be the $n\times n$ matrix whose columns are $\undim T_i$. Then
\begin{enumerate}
\item There is a unique $\Gamma\in GL(n,\ZZ)$ so that $V^tE\Gamma=D$.
\item The extended exchange matrix corresponding to $T$ is given by  
$
	\widetilde B_\Gamma=\mat{B_\Gamma\\-\Gamma}$ where $B_\Gamma=D^{-1}\Gamma^tDB_0\Gamma.
$
\end{enumerate}
\end{thm}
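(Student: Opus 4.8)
The plan is to prove both parts simultaneously by induction on the distance of $T$ from the initial object $\Lambda[1]$ in the exchange graph, using the fact (recorded after the corollary above) that the exchange graph is connected. First I would dispose of the uniqueness in part (1): the columns of $V$ are the dimension vectors of the summands of a cluster tilting object, and a short argument (the mutation column operation below, started from the base case) shows $\det V=\pm1$; in particular $V^tE$ is invertible over the rationals, so $\Gamma=(V^tE)^{-1}D$ is forced and hence unique. The content of part (1) is therefore that this $\Gamma$ is an \emph{integer} matrix with $\det\Gamma=\pm1$. Writing $\gamma_i$ for the $i$-th column of $\Gamma$, the defining equation unwinds to $\brk{\undim T_j,\gamma_i}=f_j\delta_{ji}$, so the $\gamma_i$ are exactly the Euler-dual basis to the $\undim T_j$ scaled by the $f_j$; integrality and unimodularity of $\Gamma$ are equivalent to these dual vectors being integral and the dual lattice agreeing with $\ZZ^n$.

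For the base case $T=\Lambda[1]$ I would use the Cartan identity $C^tE=D$, where $C$ has columns $\undim P_i$ (this follows from $\brk{\undim P_i,\undim M}=f_i(\undim M)_i$, valid since $P_i$ is projective). Since $\undim P_i[1]=-\undim P_i$ we have $V=-C$, hence $V^tE=-D$ and $V^tE\Gamma=D$ forces $\Gamma=-I\in GL(n,\ZZ)$; this is the desired unimodular base case, and correspondingly the $c$-matrix $-\Gamma=I$ is the expected initial seed. For the inductive step I would analyze a single mutation $T'=\mu_kT$ through its exchange triangles $T_k\to E'\to T_k'\to T_k[1]$ and $T_k'\to E''\to T_k\to T_k'[1]$ with $E',E''\in\mathrm{add}(T/T_k)$. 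Additivity of dimension vectors on triangles gives $\undim T_k'=-\undim T_k+\sum_{j\ne k}m_j\undim T_j$ with $m_j\ge0$, so $V'=VS_k$ where $S_k$ is the identity matrix with its $k$-th column replaced by $(m_1,\dots,-1,\dots,m_n)^t$. Then $\det S_k=-1$, which propagates $\det V=\pm1$, and a computation gives $\Gamma'=\Gamma\,D^{-1}(S_k^{-1})^tD$; its integrality is exactly where the divisibility coming from $E=LD=DR$ (each $E_{ij}$ divisible by $f_i$ and $f_j$) is consumed.

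For part (2) the base case is $B_{-I}=D^{-1}(-I)DB_0(-I)=B_0$, which matches the initial exchange matrix, and I would first record the automatic consistency check that $DB_\Gamma=\Gamma^t(DB_0)\Gamma$ is skew-symmetric, so $B_\Gamma$ is skew-symmetrizable with the same symmetrizer $D$. It then remains to verify that the explicit formula $B_\Gamma=D^{-1}\Gamma^tDB_0\Gamma$ together with the $c$-matrix $-\Gamma$ is preserved by the matrix-mutation rule defining $\widetilde B_{T'}$ from $\widetilde B_T$: substituting the transformation $\Gamma\mapsto\Gamma'=\Gamma\,D^{-1}(S_k^{-1})^tD$ into the formula should reproduce, entry by entry, the mutated $B$-part and the mutated $c$-part. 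By connectedness of the exchange graph this establishes the formula for every $T$. The main obstacle I anticipate is the non-simply-laced bookkeeping: establishing sign-coherence (that all $m_j$ carry one sign so that $S_k$, and hence $\Gamma'$, stays integral and unimodular) and matching the scaling factors $f_i$ in the matrix-mutation identity, since in the valued case the interplay of $L$, $R$, $D$ and the exchange multiplicities must be tracked precisely rather than absorbed into a symmetric picture.
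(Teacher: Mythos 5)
The paper itself does not prove this statement: it is quoted from \cite{IOTW2} and used as a black box, so there is no internal proof to compare against; your proposal has to be judged on its own terms. The skeleton you set up is the natural one, and the parts you actually carry out are correct: uniqueness of $\Gamma$ (since $E=LD$ is invertible and $\det V=\pm1$), the base case $V=-C$, $C^tE=D$, $\Gamma=-I$, and the linear-algebra form of the update $\Gamma'=\Gamma D^{-1}(S_k^{-1})^tD$ (indeed $S_k^{-1}=S_k$).

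However, the two steps you defer are not ``bookkeeping''; they are the content of the theorem, and as written the induction cannot close. First, to define $S_k$ you invoke additivity of dimension vectors on triangles, but in the cluster category there are \emph{two} exchange triangles for the pair $(T_k,T_k')$, and with the convention $\undim P[1]=-\undim P$ additivity holds for only one of them; which one is correct depends on whether $\mu_k$ is green or red at $T$, i.e., on the sign of the $k$-th column of $\Gamma$. So writing $V'=VS_k$ with a single coherent sign pattern already presupposes sign-coherence of $c$-vectors, which you list only as an ``anticipated obstacle.'' That is a deep theorem (Derksen--Weyman--Zelevinsky in the skew-symmetric case, Demon\^et and \cite{IOTW2} itself in the valued case), not an artifact of non-simply-laced bookkeeping, and your induction does not produce it for free. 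Second, for part (2), the mutation rule of Definition \ref{def: mutation of B} is expressed in the entries $b_{jk}$ of $B_\Gamma$ and the signs of the columns of $-\Gamma$, while your $S_k$ is expressed in the homological multiplicities $m_j$ of the exchange triangle. The promised ``entry by entry'' verification therefore requires the identification of the $m_j$ with the positive (or negative) parts of the entries $b_{jk}$ of $D^{-1}\Gamma^tDB_0\Gamma$ --- and this identification of categorical exchange data with the algebraically defined matrix is precisely what the theorem asserts. The same missing identification is what your integrality claim rests on: to see that $\Gamma'=\Gamma D^{-1}S_k^tD$ is integral you need $f_k\mid m_jf_j$, and no argument for this divisibility is given beyond the remark that it ``is consumed'' there. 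In sum, the proposal proves the routine outer shell (uniqueness, base case, the shape of the update) and defers exactly the core of the theorem.
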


For example, if $T=\Lambda=P_1\oplus\cdots\oplus P_n$ then $\Gamma=I_n$, the identity matrix, since $E(P_i,S_j)=f_i\delta_{ij}$ and $B_{I_n}=B_0$. So, the corresponding extended exchange matrix is $\widetilde B_{I_n}=\mat{B_0\\-I_n}$. Similarly, the initial cluster tilting object $\Lambda[1]$ corresponds to the \emph{initial extended exchange matrix} $\widetilde B_0:=\widetilde B_{-I_n}=\mat{B_0\\I_n}$.

Note that $D\widetilde B_\Gamma=\Gamma^tDB_0\Gamma$ is skew-symmetric. The content of the $c$-Vector Theorem is that if $T$ is replaced with $\mu_kT$ then $\widetilde B_\Gamma$ is replaced by $\mu_k\widetilde B_\Gamma$ defined as follows.

\begin{defn}[\cite{FZ}]\label{def: mutation of B}
Let $\widetilde B=\mat{B\\  C}$ be any $2n\times n$ integer matrix where $DB$ is skew-symmetric and let $1\le k\le n$. Then the \emph{mutation of $\widetilde B$ in the $k$-th direction}, denoted $\mu_k \widetilde B$, is defined to be the matrix with entries
\[
	b_{ij}'=\begin{cases} -b_{ij} & \text{if } i=k \text{ or }j=k\\
b_{ij} +b_{ik}|b_{kj}| & \text{if } b_{ik}b_{kj}>0\\	
  b_{ij}  & \text{otherwise}
    \end{cases}
\]
\end{defn}

A (reachable) \emph{extended exchange matrix} $\widetilde B=\mat{B\\  C}$ is defined to be any matrix obtained by from the initial extended exchange matrix by iterated mutation. The columns of $C$ are called the \emph{$c$-vectors} of $\widetilde B$.

\begin{defn}[\cite{Keller}]\label{defn: max green seq}
A mutation $\mu_k$ is called \emph{green} if the entries of the $k$-th column of $C$ are all nonnegative. A \emph{maximal green sequence} is a sequence of green mutations starting with the initial extended exchange matrix $\widetilde B_0$ and ending in a matrix $\widetilde B$ whose bottom half $C$ has no positive entries.
\end{defn}

\begin{eg}\label{eg: C2}
Let $\Lambda=\mat{\CC & 0\\ \CC & \RR}$. This is an algebra of type $C_2$ with $4$ indecomposable modules. The Euler matrix is
\[
	E=\mat{2 & 0 \\ -2 & 1}=LD=\mat{1 & 0 \\ -1 & 1}\mat{2 & 0 \\ 0 & 1}=DR=\mat{2 & 0 \\ 0 & 1}\mat{1 & 0 \\ -2 & 1}.
\]
So, the initial exchange matrix is $B_0=L^t-R=\mat{0 & -1 \\ 2 & 0}$. There are exactly two maximal green sequences of length 2 and 4 respectively. (The first mutation is either $\mu_1$ or $\mu_2$. After that there is no choice but to alternate $\mu_1$ and $\mu_2$.)
\[
	\mat{0 & -1\\ 2 & 0\\ \hline 1 & 0\\ 0 & 1} \xrightarrow{\mu_1}
	\mat{0 & 1\\ -2 & 0\\ \hline -1 & 0\\ 0 & 1} \xrightarrow{\mu_2}
	\mat{0 & -1\\ 2 & 0\\ \hline -1 & 0\\ 0 & -1}
\]
\[
	\mat{0 & -1\\ 2 & 0\\ \hline 1 & 0\\ 0 & 1} \xrightarrow{\mu_2}
	\mat{0 & 1\\ -2 & 0\\ \hline 1 & 0\\ 2 & -1} \xrightarrow{\mu_1}
	\mat{0 & -1\\ 2 & 0\\ \hline -1 & 1\\ -2 & 1} \xrightarrow{\mu_2}
	\mat{0 & 1\\ -2 & 0\\ \hline 1 & -1\\ 0 & -1} \xrightarrow{\mu_1}
	\mat{0 & -1\\ 2 & 0\\ \hline -1 & 0\\ 0 & -1}
\]
We denote these maximal green sequences by $(1,2)$ and $(2,1,2,1)$ respectively.
\end{eg}

This calculation generalizes in the following well-known way \cite{FZII}.

\begin{prop}\label{prop: polygon for C2}
Suppose that $\widetilde B=\mat{B\\C}$ is an extended exchange matrix for which $\mu_j$ and $\mu_k$ are green mutations. Suppose $b_{jk}=-1$ and $b_{kj}=2$. Then the two maximal sequences of green mutations using only $\mu_j$ and $\mu_k$ are the sequences $(j,k)$ and $(k,j,k,j)$. Furthermore, $\mu_k\mu_j\widetilde B=\mu_j\mu_k\mu_j\mu_k \widetilde B$.
\end{prop}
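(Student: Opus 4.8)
The plan is to reduce everything to the rank-two computation already carried out in Example~\ref{eg: C2}. The first observation is that the $2\times2$ block of $B$ on rows and columns $j,k$ is preserved up to sign by mutation in directions $j$ and $k$: in the formula of Definition~\ref{def: mutation of B}, each of $b_{jk}$ and $b_{kj}$ has one index equal to the mutation direction, so each is simply negated, while $b_{jj}=b_{kk}=0$ are untouched and the additive correction never applies inside the block. Hence along any sequence of mutations using only $\mu_j$ and $\mu_k$ this block remains $\pm\mat{0 & -1\\ 2 & 0}$; in particular $|b_{jk}|=1$ and $|b_{kj}|=2$, so the product $b_{jk}b_{kj}=-2$ is invariant.

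Next I would show that the green/red pattern is governed entirely by this block together with the two $c$-vectors $c_j,c_k$ (the $j$-th and $k$-th columns of $C$). By sign-coherence of $c$-vectors (a standard fact, here a consequence of the $c$-Vector Theorem, the $c$-vectors being $\pm$ dimension vectors of exceptional objects) each column of $C$ is either nonnegative or nonpositive, so $\mu_j$ is green exactly when $c_j$ is nonnegative, and similarly for $k$. Because $c_k$ is sign-coherent, the condition $c_{ik}b_{kj}>0$ in the mutation rule has the same truth value for every row $i$; consequently $\mu_k$ sends $c_k\mapsto -c_k$ and $c_j\mapsto c_j$ or $c_j+2c_k$ according to a single sign comparison, and symmetrically $\mu_j$ sends $c_j\mapsto -c_j$ and $c_k\mapsto c_k$ or $c_k+c_j$. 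Together with the invariance $|b_{jk}|=1$, $|b_{kj}|=2$, this means that the pair $(c_j,c_k)$ with the $2\times2$ block forms a closed subsystem under $\mu_j,\mu_k$ that evolves exactly as the $c$-data of a rank-two cluster algebra of type $C_2$. The hypotheses $b_{jk}=-1$, $b_{kj}=2$, $c_j\ge0$, $c_k\ge0$ place us at precisely the initial (source) configuration of Example~\ref{eg: C2}, so the sequence of green/red labels is forced and the two maximal sequences of green mutations using only $\mu_j,\mu_k$ are $(j,k)$ and $(k,j,k,j)$.

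It remains to prove the reconvergence $\mu_k\mu_j\widetilde B=\mu_j\mu_k\mu_j\mu_k\widetilde B$, and this is the step I expect to be the main obstacle: mutations in directions $j$ and $k$ also alter the \emph{other} columns of $C$ (each is modified by adding multiples of $c_j$ or $c_k$), so one cannot simply restrict to the $2\times2$ block. I would handle this using the bijection between extended exchange matrices and cluster tilting objects furnished by the $c$-Vector Theorem. Since $\widetilde B_\Gamma$ is determined by its bottom half $-\Gamma=C$ through $B_\Gamma=D^{-1}\Gamma^tDB_0\Gamma$, it suffices to show that the cluster tilting objects $\mu_k\mu_j T$ and $\mu_j\mu_k\mu_j\mu_k T$ coincide. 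Each mutation $\mu_i$ changes only the $i$-th summand, so both objects agree in every summand other than $j$ and $k$; and the pair $(T_j,T_k)$ evolves through the type-$C_2$ hexagon of exchange pairs, whose two oriented green paths from the unique source both terminate at the unique sink. Hence the $j$- and $k$-summands agree as well, the two cluster tilting objects are equal, and therefore so are the two extended exchange matrices. This is exactly the ``well-known'' generalization of Example~\ref{eg: C2} recorded in \cite{FZII}.
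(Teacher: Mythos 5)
Your first two steps are correct and essentially reproduce the paper's own argument for the first assertion: mutation in directions $j,k$ only negates the $2\times 2$ block, and sign-coherence lets you track the pair $(c_j,c_k)$ columnwise, so it evolves exactly as in Example \ref{eg: C2} and the two maximal green subsequences are $(j,k)$ and $(k,j,k,j)$.

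The gap is in the final step, the one you yourself flag as the crux. Transferring the problem to cluster tilting objects via the $c$-Vector Theorem is legitimate, but the statement you then invoke --- that the completions of the rigid object $T/(T_j\oplus T_k)$ form a hexagon whose two oriented green paths from the unique source terminate at the same object --- is precisely the nontrivial convergence assertion, restated in the category rather than proved. Citing \cite{FZII} does not fill it: that paper concerns rank-2 exchange patterns in cluster algebras (essentially the identity under proof), not the exchange graph of completions of a corank-2 rigid object in a cluster category. Making your categorical claim rigorous would require something like Iyama--Yoshino reduction, a recognition theorem identifying the reduced category as a rank-2 cluster category, and compatibility of green mutation with reduction --- machinery nowhere in the paper's toolkit and delicate over non-algebraically closed fields, which is exactly the setting of this proposition (a $B_2/C_2$ block forces a non-simply-laced situation, as in the algebra of Example \ref{eg: C2}). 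Note also that in the paper's logical order the polygon structure of the oriented exchange graph (Proposition \ref{prop: possible polygons}) is \emph{deduced from} this proposition, so assuming hexagon closure of exchange pairs runs the development backwards. A further wrinkle: equality of the two endpoints as unordered objects would not by itself give equality of matrices --- in the $A_2$ (pentagon) case the two green paths end at the same cluster, yet the resulting matrices differ by the transposition of the $j$-th and $k$-th rows and columns --- so even granting the hexagon you would still need the rank-2 $c$-vector computation to pin down the indexing.

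The paper closes this step by direct computation instead: for every other column $c_p$, writing $x=b_{jp}$ and $y=b_{kp}$, both mutation sequences send $c_p$ to $c_p+\max(0,x)\,c_j+\max(0,y,2x+y)\,c_k$, checked case-by-case according to the signs of $x$, $y$, $x+y$, $x+2y$; this shows the bottom halves agree. Then the formula of \cite{NZ}, $B'=D^{-1}X^tDBX$ with $X=C^{-1}C'$, valid along green mutations, shows that the new $C$-matrix determines the new $B$-matrix, so the top halves agree as well. If you wish to keep your categorical framing you must supply an actual proof of the hexagon closure for exchange pairs; otherwise the columnwise computation is the short, self-contained route.
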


\begin{proof}
Let $c_j,c_k$ be the $j$-th and $k$-th columns of $C$. These must be positive (have all coefficients $\ge0$). If we multiply $(c_j,c_k)$ on the right by the $2\times 2$ $c$-matrices in Example \ref{eg: C2} we will get the $j$-th and $k$-th columns of the bottom half of the corresponding mutations of $\widetilde B$. For example, the $j$-th and $k$-th columns of the $C$-matrix of $\mu_j\mu_k \widetilde B$ are $(c_j,c_k)\mat{-1&1\\-2&1}=(-c_j-2c_k,c_j+c_k)$.

To show that $\mu_k\mu_j\widetilde B=\mu_j\mu_k\mu_j\mu_k \widetilde B$ it suffices to examine the $c$-vectors $c_p$, i.e., the columns of the matrix $C$. If we let $b_{jp}=x,b_{kp}=y$ then, by the exchange formula \ref{def: mutation of B}, $c_p$ mutates to
\[
	c_p'=c_p+\max(0,x)c_j+\max(0,y,2x+y)c_k.
\]
Let $x'=\max(0,x),y'=\max(0,y,2x+y)$. For the mutation sequence $(k,j,k,j)$ we get the same vector $(x',y')\in\NN^n$ by a straightforward calculation, dividing into cases depending on the signs of $x,y,x+y,x+2y$ (cf. Reading \cite{R}).
The new $c$-matrix $C'$ determines $B'=(\mu_j\mu_k)^2B$ since, by a formula of \cite{NZ} which holds for any green mutation of any extended exchange matrix, $B'=D^{-1}X^tDBX$ where $X=C^{-1}C'$ is the matrix whose $j$-th and $k$-th rows we have computed (i.e., $X_{pj}=x'$, $X_{pk}=y'$, and $X$ differs from the identity matrix only in its $j$-th and $k$-th rows). Therefore, $\mu_k\mu_j B=(\mu_j\mu_k)^2B$.\end{proof}

%%%%%%%%%%%%%%%%%%%%%%%%%%%%%%%%%%%%%%%%%%
%%%
%%%   Polygonal Deformations and Regular Objects
%%%
%%%%%%%%%%%%%%%%%%%%%%%%%%%%%%%%%%%%%%%%%%

\section{Polygonal deformations and regular objects}

\subsection{Polygons} In forthcoming work, Nathan Reading defines a \emph{polygon} in a lattice $L$ to be a finite interval of the form $[x\vee y,x\wedge y]$ which is the union of two chains in $L$ which intersect only at the top and bottom (at $x\vee y,x\wedge y$). Since the poset of finite torsion classes for a tame algebra is not a lattice \cite{IRTT}, we use the following weaker notion which applies only to the oriented exchange graph $E(\Lambda)$ defined above.

\begin{defn}[Based on Reading \cite{R1X}]
A \emph{polygon} $P$ is a finite subgraph of the oriented exchange graph closed under multiple applications of two mutations, say $\mu_j,\mu_k$. 
\end{defn}

\begin{prop}\label{prop: possible polygons}
Suppose that $\widetilde B$ is an extended exchange matrix for which $\mu_j,\mu_k$ are both green mutations. Consider the longest sequences of green mutations of $\widetilde B$ starting with $\mu_j$ or $\mu_k$ and alternating between these. Then one sequence has length $2$ and the other has length $\ell=2,3,4,6$ or $\infty$ depending on whether $|b_{jk}b_{kj}|=0,1,2,3$ or $\ge4$. Furthermore, the results of these mutations on $\widetilde B$ give the same matrix up to permutation of the $j$-th and $k$-th rows and columns. Therefore, polygons in the oriented exchange graph have either $4,5,6$ or $8$ sides.
\end{prop}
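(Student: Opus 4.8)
The plan is to reduce everything to the classical rank-two analysis of Fomin--Zelevinsky. Since we only ever mutate in the two directions $j$ and $k$, the only data that can change are the $2\times 2$ block $\begin{pmatrix}0 & b_{jk}\\ b_{kj} & 0\end{pmatrix}$ of $B$ (the diagonal entries vanish because $DB$ is skew-symmetric) and the two columns $c_j,c_k$ of $C$, together with the remaining columns $c_p$ which ride along passively. By Definition~\ref{def: mutation of B}, either $\mu_j$ or $\mu_k$ simply negates both $b_{jk}$ and $b_{kj}$, and no entry $b_{j\ell},b_{\ell k}$ with $\ell\neq j,k$ ever feeds into this block since we never mutate in direction $\ell$; hence $|b_{jk}b_{kj}|$ is a \emph{mutation invariant}. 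The essential tool is sign-coherence of $c$-vectors: each column of $C$ is $\pm$ the dimension vector of an exceptional object and is therefore either entirely nonnegative or entirely nonpositive. Consequently (i) $\mu_j$ is green exactly when $c_j$ is positive, and (ii) the condition $c_{ij}b_{jk}>0$ in the exchange formula depends only on the common sign of $c_j$ and on $\operatorname{sgn}(b_{jk})$, so $\mu_j$ acts on the pair $(c_j,c_k)$ by a single $2\times 2$ integer matrix determined by the signs and by the block. Thus the evolving sign pattern of $(c_j,c_k)$, and therefore which alternating mutations remain green, is governed by exactly the same recursion as the rank-two cluster algebra with principal coefficients and exchange matrix $\begin{pmatrix}0 & b_{jk}\\ b_{kj} & 0\end{pmatrix}$.

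Having made this reduction, I would invoke the Fomin--Zelevinsky classification: this rank-two algebra is of finite type $A_1\times A_1$, $A_2$, $B_2$, or $G_2$ according to whether $|b_{jk}b_{kj}|=0,1,2,3$, and of affine or wild type when $|b_{jk}b_{kj}|\ge 4$. In each finite case the two-direction sub-exchange-graph is a cycle, and since both $\mu_j$ and $\mu_k$ are green at $\widetilde B$, the matrix $\widetilde B$ is its source; the two alternating green sequences are then the two sides of this cycle and terminate together at the opposite vertex. A direct computation generalizing Example~\ref{eg: C2} gives the shorter side length $2$ and the longer side length $\ell = 2,3,4,6$ respectively, yielding polygons with $2+\ell = 4,5,6,8$ sides. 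When $|b_{jk}b_{kj}|\ge 4$ the rank-two $c$-vectors grow (Fibonacci-type recursion) and one of the two columns stays positive indefinitely, so the alternating green sequence never reaches a nonpositive state; hence $\ell=\infty$ and no finite polygon is formed.

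It remains to verify that both finite sequences reach the \emph{same} extended exchange matrix up to interchanging the $j$-th and $k$-th rows and columns. I would do this at the level of $c$-vectors, exactly as in Proposition~\ref{prop: polygon for C2}. For the columns $c_j,c_k$ the common endpoint is forced by the cycle structure above (and the $j\leftrightarrow k$ swap records which incoming direction is used at the sink). For an external column $c_p$, one writes the net transformation as $c_p\mapsto c_p+x'c_j+y'c_k$ where $x',y'$ are explicit functions of the initial entries $b_{jp},b_{kp}$ and of the signs encountered along the way, and checks by a case analysis on the signs of $b_{jp},b_{kp}$ and their relevant combinations that both alternating sequences produce the \emph{same} $(x',y')$. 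This makes the full $C$-matrices agree up to the $j\leftrightarrow k$ swap; by the Nakanishi--Zelevinsky formula $B'=D^{-1}X^tDBX$ with $X=C^{-1}C'$ cited in the proof of Proposition~\ref{prop: polygon for C2}, agreement of the $C$-parts forces agreement of the $B$-parts as well.

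The main obstacle is not the conceptual reduction but its two technical endpoints. First, one must be careful that the $2\times 2$ action on $(c_j,c_k)$ is genuinely well-defined, which is precisely where sign-coherence is indispensable; without it greenness would not be a sign condition and the rank-two recursion would not close up. Second, the infinite case $|b_{jk}b_{kj}|\ge 4$ requires a separate argument that the alternating sequence truly never terminates, resting on the positivity and unbounded growth of rank-two $c$-vectors in affine and wild type. The explicit endpoint verification for $G_2$ (six mutations on one side) is the most laborious computation, but it is routine once the $A_2$ and $B_2$ cases are in hand.
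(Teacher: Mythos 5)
Your proposal is correct and takes essentially the same route as the paper: the paper's own proof reduces to the four rank-two cases of Fomin--Zelevinsky and Reading and defers the computational content to Proposition \ref{prop: polygon for C2} (tracking $(c_j,c_k)$ via the rank-two $c$-matrices, a sign case analysis for the passive columns $c_p$, and the Nakanishi--Zelevinsky formula $B'=D^{-1}X^tDBX$ for the $B$-part), which is exactly the argument you spell out. The only difference is explicitness: you fill in what the paper leaves to citation --- the invariance of the $2\times 2$ block, the role of sign-coherence in closing up the rank-two recursion, and the non-termination argument when $|b_{jk}b_{kj}|\ge 4$ --- and you correctly flag the $j\leftrightarrow k$ swap, which the paper notes is needed precisely in the $\ell=3$ case.
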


\begin{proof}
(From \cite{FZII} and \cite{R}.) Up to sign, there are only four possible finite cases. Proposition \ref{prop: polygon for C2} proves one of the cases; the others are similar. The case $\ell=3$ is the only case where the results of the two mutation sequences ($\mu_k\mu_j\widetilde B$, $\mu_k\mu_j\mu_k\widetilde B$ in that case) are not equal.
\end{proof}

\begin{rem}\label{rem: 4 or 5}
The finite cases $\ell=2,3,4,6$ correspond to the rank 2 root systems $A_1\times A_1$, $A_2$, $B_2$, $G_2$ respectively. From this it follows that in the simply laced case, polygons can only have either 4 or 5 sides.
\end{rem}

\subsection{Polygonal deformations}

\begin{defn}[Based on Reading \cite{R1X}]\label{def: polygonal deformation}
A \emph{polygonal deformation} between two maximal green sequences is a sequence of local deformations replacing one side of a polygon with the other in the middle of a maximal green sequence. These local deformations will be called \emph{elementary polygonal deformations}.
\end{defn}

The No Gap Conjecture would follow from the statement that all maximal green sequences lie in the same polygonal deformation class since, in the simply laced case, an elementary polygonal deformation changes the length of a maximal green sequence by $0$ or $1$ (Remark \ref{rem: 4 or 5}).

Let $T,T'$ be cluster tilting objects of $\Lambda$ and let $\cM G(T,T')$ be the set of all green sequences from $T$ to $T'$. The set of objects in these sequences will be denoted $[T,T']$. Then
	\[
	[T,T']=\{T''\,:\, T\le T''\le T'\}
	\]
where $T\le T''$ if and only if $\cM G(T,T'')$ is nonempty.

\begin{lem}[Well known, cf. e.g. \cite{BHIT}]
A green sequence cannot mutate the same $c$-vector twice. In particular, a green sequence cannot come back to the same point $T$.
\end{lem}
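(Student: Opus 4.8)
The plan is to reduce both assertions to the strict monotonicity of the torsion classes attached to cluster tilting objects along a green sequence, using sign-coherence of $c$-vectors together with the $c$-Vector Theorem. The first step is to record the local effect of a single green mutation on the $c$-matrix. If $\mu_k$ is green then, by sign-coherence, the $k$-th $c$-vector $\gamma := c_k$ has all entries $\ge 0$; reading off the bottom block of Definition \ref{def: mutation of B} then shows that $\mu_k$ negates this column, $c_k \mapsto -\gamma$, and alters every other column only by adding a nonnegative multiple of $\gamma$ (explicitly $c_p \mapsto c_p + b_{kp}\gamma$ when $b_{kp}>0$, and $c_p\mapsto c_p$ otherwise). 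Thus each green mutation flips exactly one positive $c$-vector to its negative, and the vector flipped, call it $\gamma_s$ at step $s$, is precisely the datum the lemma forbids from recurring.

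Next I would translate into representation theory. By the $c$-Vector Theorem together with sign-coherence, the positive $c$-vectors occurring in any reachable $\widetilde B$ are dimension vectors of exceptional modules, and the vector $\gamma_s$ flipped at step $s$ is the dimension vector of the brick $B_s$ labelling the covering relation $\mathcal{T}_{s-1}\subset\mathcal{T}_s$ of functorially finite torsion classes induced by the green mutation (the dictionary of \cite{BHIT}). A green sequence therefore produces a strictly increasing chain $\mathcal{T}_0\subset\mathcal{T}_1\subset\cdots$ of torsion classes. Since $B_s\in\mathcal{T}_s\subseteq\mathcal{T}_{t-1}$ whereas $B_t\notin\mathcal{T}_{t-1}$ for every $t>s$, the bricks $B_s$ are pairwise distinct as objects, and from this I would deduce that the flipped $c$-vectors $\gamma_s=\undim B_s$ are pairwise distinct, which is exactly the statement that the same $c$-vector is never mutated twice.

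The ``no return'' clause is then immediate from the strict monotonicity of the chain: the cluster tilting object $T$ determines its torsion class $\mathcal{T}(T)$, and along a green sequence $\mathcal{T}$ strictly increases, so no object can recur. (Equivalently, a returning green sequence would restore the $c$-matrix $C$, which forces some flipped positive $c$-vector to reappear and be flipped again, contradicting the first assertion.)

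The step I expect to be the main obstacle is upgrading distinctness of the bricks $B_s$ as objects to distinctness of their dimension vectors $\gamma_s$, since a priori two nonisomorphic bricks can share a dimension vector. The clean fix is to argue directly with $c$-vectors rather than with bricks: once $\mathcal{T}$ has grown past the wall determined by $\gamma_s$, the positive $c$-vector $\gamma_s$ can never be recreated, so it cannot be flipped a second time. Making this precise is where I would lean on the wall-crossing and monotonicity statements of \cite{BHIT}.
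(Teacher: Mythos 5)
The paper offers no proof of this lemma at all --- it is quoted as ``well known'' with a pointer to \cite{BHIT}, where the argument runs through green paths and wall crossings --- so your attempt has to be judged on its own merits. Your opening computation of how a green mutation acts on the $C$-matrix is correct, and your torsion-class argument is sound as far as it goes: it proves that the bricks $B_s$ labelling the steps are pairwise distinct, and it proves the no-return clause (a cluster determines its torsion class, and the torsion classes strictly increase). The genuine gap is precisely the one you flagged yourself, and your proposed repair does not repair it. The assertion that ``once $\mathcal{T}$ has grown past the wall determined by $\gamma_s$, the positive $c$-vector $\gamma_s$ can never be recreated'' is not a lemma you can lean on --- it is a strengthening of the very statement to be proved, and deferring ``making this precise'' to unspecified wall-crossing results of \cite{BHIT} reduces your proof to a citation of the lemma itself. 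As written, the first (main) assertion of the lemma is not proved.

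What you are missing is that the gap closes in one sentence in the hereditary setting, using a fact you already invoked: by the $c$-Vector Theorem the flipped vectors $\gamma_s$ are dimension vectors of exceptional modules, i.e.\ real Schur roots, and a real root admits a unique indecomposable module up to isomorphism (Kac's theorem; see \cite{DR} for species/modulated quivers). Hence $B_s\cong B_t$ if and only if $\undim B_s=\undim B_t$, so distinctness of the bricks does give distinctness of the $\gamma_s$ --- there is no need to ``argue directly with $c$-vectors rather than with bricks,'' and indeed that evasion is what created the circularity. With that sentence inserted, your argument becomes a complete proof, and a genuinely different one from the wall-crossing argument of \cite{BHIT}: yours is lattice-theoretic (monotone torsion classes plus brick labels), theirs is geometric (a green mutation crosses the hyperplane orthogonal to $\gamma$ in a fixed direction, and a continuous path cannot cross the same hyperplane twice in the same direction without a crossing in the opposite direction in between). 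One further caution: your parenthetical ``equivalently'' for the no-return clause is not valid as stated. Restoring the $C$-matrix does not obviously force some column to be mutated twice, because a column can pass from negative back to positive under mutation at a \emph{different} index --- this already happens along the green sequence $(2,1,2)$ in type $A_2$, where the second column moves from $(0,-1)$ to $(1,0)$ without being mutated. So keep the torsion-class argument as the actual proof of that clause.
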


\begin{lem}\label{lem: main lemma}
Suppose that $T$ is a cluster tilting object of $\Lambda$ with the property that there are only finitely many $T'\ge T$. Then $\cM G(T,\Lambda)$ is finite, nonempty, and any two elements of it lie in the same polygonal deformation class.
\end{lem}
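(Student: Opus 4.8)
The plan is to induct on the cardinality $n(T)$ of the up-set $\{T'\,:\,T'\ge T\}$, which is finite by hypothesis and is genuinely a poset because, by the preceding lemma, a green sequence cannot return to its starting point (so $T\le T''\le T$ forces $T=T''$). In the base case $n(T)=1$ we have $\{T'\ge T\}=\{T\}$, so $T$ admits no green mutation and is therefore the unique sink $\Lambda$; then $\cM G(\Lambda,\Lambda)$ is the single empty sequence and there is nothing to prove. For the inductive step, note that every green mutation $\mu_k$ at $T$ produces $T^{(k)}:=\mu_kT$ with $T^{(k)}>T$, whence $\{T'\ge T^{(k)}\}\subsetneq\{T'\ge T\}$ and the inductive hypothesis applies to each such $T^{(k)}$.

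First I would record the recursive decomposition
\[
	\cM G(T,\Lambda)=\bigsqcup_{k\text{ green at }T}\mu_k\cdot\cM G(T^{(k)},\Lambda),
\]
the disjoint union, over the green directions at $T$, of the sequences obtained by prepending $\mu_k$ to a green sequence from $T^{(k)}$ to $\Lambda$; this is a genuine partition since each green sequence begins with exactly one mutation. Because a $T\ne\Lambda$ has a positive $c$-vector and hence at least one green mutation, and because each $\cM G(T^{(k)},\Lambda)$ is finite and nonempty by induction, both the finiteness and the nonemptiness of $\cM G(T,\Lambda)$ follow at once.

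The substance of the lemma is that any two $\sigma,\tau\in\cM G(T,\Lambda)$ are polygonally deformable into each other. If $\sigma$ and $\tau$ begin with the same mutation $\mu_k$, deleting this common first step yields two elements of $\cM G(T^{(k)},\Lambda)$, which lie in one polygonal deformation class by induction; re-prepending $\mu_k$ leaves every elementary deformation in the interior of the sequence, so $\sigma$ and $\tau$ remain equivalent. If instead $\sigma$ starts with $\mu_j$ and $\tau$ with $\mu_k$ for $j\ne k$, then both $\mu_j$ and $\mu_k$ are green at $T$, so Proposition \ref{prop: possible polygons} supplies a polygon: two alternating green sequences $\rho_j$ (beginning with $\mu_j$) and $\rho_k$ (beginning with $\mu_k$) running from $T$ to a common cluster tilting object $T^\ast>T$. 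Fixing any $\pi\in\cM G(T^\ast,\Lambda)$, which is nonempty by induction as $n(T^\ast)<n(T)$, the two sequences $\rho_j\pi$ and $\rho_k\pi$ differ by a single elementary polygonal deformation (one side of the polygon replaced by the other, with common tail $\pi$). Since $\sigma$ and $\rho_j\pi$ share their first mutation $\mu_j$, the same-first-step case gives $\sigma\sim\rho_j\pi$; likewise $\tau\sim\rho_k\pi$; and chaining $\sigma\sim\rho_j\pi\sim\rho_k\pi\sim\tau$ finishes the induction.

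I expect the main obstacle to be the case $j\ne k$: one must confirm that the two sides of the polygon from Proposition \ref{prop: possible polygons} genuinely reunite at one cluster tilting object $T^\ast$ along green mutations---the terminal matrices agree only up to a permutation of the $j$- and $k$-indexed rows and columns, which has to be seen not to alter the underlying object---and that the polygon sides together with the shared tail $\pi$ are bona fide green sequences, so that swapping the sides is a legitimate elementary polygonal deformation in the sense of Definition \ref{def: polygonal deformation}.
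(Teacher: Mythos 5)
Your overall strategy---induction on the size of the up-set $\{T'\ge T\}$, splitting off the first mutation, and using the rank-two polygon to reconcile sequences that diverge at $T$---is sound and runs parallel to the paper's proof, which assembles the same ingredients as an extremal argument (take a non-equivalent pair $C_1\not\sim C_2$ with longest common initial segment and derive a contradiction). But there is one genuine gap, and it sits exactly at the point you flag as the ``main obstacle'' and then leave unresolved: the claim that Proposition \ref{prop: possible polygons} ``supplies a polygon.'' That proposition does \emph{not} guarantee a polygon: it allows $\ell=\infty$ when $|b_{jk}b_{kj}|\ge 4$, and in that case the two alternating green sequences starting at $T$ with $\mu_j$ and $\mu_k$ never reunite---there is no $T^\ast$, and the chain $\sigma\sim\rho_j\pi\sim\rho_k\pi\sim\tau$ cannot even be written down. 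Since the lemma is stated for arbitrary $\Lambda$ (and is applied in the paper to tame algebras, where rank-two subpatterns with $|b_{jk}b_{kj}|\ge4$ genuinely occur), this case cannot be ignored.

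The fix is short, but it is precisely the second place where the finiteness hypothesis must be invoked (you use it only to ground the induction): every green sequence starting at $T$ stays inside the finite set $\{T'\ge T\}$ and, by the preceding no-return lemma, never repeats a cluster, so it is finite; hence the alternating sequences terminate, $\ell\ne\infty$, and the polygon exists. This is exactly the sentence in the paper's proof: ``these two mutations when alternated give a polygon since there are no infinite green sequences starting at $T$.'' Your secondary worry---that the terminal matrices agree only up to a permutation of the $j$-th and $k$-th rows and columns---is not a real obstruction: clusters are unordered, so the two mutation sequences end at the same cluster tilting object, which is how Proposition \ref{prop: possible polygons} itself concludes that polygons in the oriented exchange graph have $4$, $5$, $6$ or $8$ sides. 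With the finiteness observation inserted, your induction goes through and is a legitimate (mildly reorganized) variant of the paper's argument.
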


\begin{proof}
First note that every green mutation sequence starting at $T$ will terminate in a finite number of steps to $\Lambda$ since $\Lambda$ is the only cluster tilting object with all negative $c$-vectors. In particular, $\cM G(T,\Lambda)$ is nonempty. Finiteness of $\cM G(T,\Lambda)$ is clear since green sequences cannot go through the same cluster twice.

Write $C\sim C'$ if $C,C'\in \cM G(T,\Lambda)$ lie in the same polygonal deformation class. To prove that all elements of $\cM G(T,\Lambda)$ are equivalent, choose $C_1\not\sim C_2$ so that the two mutation sequences $C_1,C_2$ from $T$ to $\Lambda$ have the largest possible initial common sequence, say $T=T^{(1)},T^{(2)},\dots,T^{(m)}$. The first sequence $C_1$ mutates $T^{(m)}$ to $\mu_j T^{(m)}$, the second $C_2$ mutates $T^{(m)}$ to $\mu_k T^{(m)}$ where $j\neq k$. These two mutations when alternated give a polygon since there are no infinite green sequences starting at $T$. The polygon goes from $T^{(m)}$ up to some $T'$. 

Since $T'\ge T^{(m)}\ge T$, there are at most finitely many $T''\ge T'$. Therefore, any green sequence starting with $T'$ will reach $\Lambda$ in a finite number of steps. Choose one such sequence. Together with the initial segment $T^{(1)},T^{(2)},\dots,T^{(m)}$ and the two sides of the polygon, this gives two more green sequences $C_3,C_4$ from $T$ to $\Lambda$ which differ by a polygonal deformation (i.e. $C_3\sim C_4$); moreover the initial $m+1$ segment of $C_3$ agrees with the initial $m+1$ segment of $C_1$ and the initial $m+1$ segment of $C_4$ agrees with the initial $m+1$ segment of $C_2$. Therefore by maximality of $m$, we have that $C_1\sim C_3$ and $C_2\sim C_4$. So, $C_1\sim C_3\sim C_4\sim C_2$ showing that $C_1\sim C_2$ for any two elements of $\cM G(T,\Lambda)$.
\end{proof}

\begin{figure}[h]
% Figure for proof of Theorem 1,2,3
\begin{center}
\begin{tikzpicture}[scale=.9]%[>=Stealth]
%\draw[help lines, color=blue] (0,0) grid (8,6);
%\foreach \x in {0,2,...,8}\draw (\x,0) node{\x};
%\foreach \y in {0,2,...,6}\draw (0,\y) node{\y};
%\draw[fill] (0,0) circle[radius=2pt]node[right]{$T^1$};
\draw[fill] (0.3,0.9) circle[radius=2pt]node[anchor=north west]{$T=T^{(1)}$};
\draw[fill] (0.3,2.5) circle[radius=2pt]node[anchor=north west]{$T^{(m)}$};
\draw[thick] (0,1.8) node{$\vdots$};
\draw[thick] (0.3,0.9)--(0,1.4) (0,2)--(.3,2.5);
\draw[fill] (1,3.3) circle[radius=2pt]node[anchor=north west]{$\mu_kT^{(m)}$};
\draw[fill] (1,4) circle[radius=2pt];
\draw[fill] (.3,4.5) circle[radius=2pt] (.3,4.3)node[below]{$T'$};
\draw[fill] (-.5,3.5) circle[radius=2pt]node[anchor=north east]{$\mu_jT^{(m)}$};
\draw[thick] (0.3,2.5)--(-.5,3.5)--(.3,4.5)--(1,4)--(1,3.3)--cycle;
\draw[fill] (.5,7) circle[radius=2pt] node[anchor=south west]{$\Lambda$};
\draw[thick,->] (-.5,3.5)-- (-2, 4.5) -- (-1.7,5.5) node[left]{$C_1$};
\draw[thick] (-1.7,5.5) --(.5,7); 
\draw[thick,->] (1,3.3)-- (2, 4.5) -- (1.7,5.5) node[right]{$C_2$};
\draw[thick] (1.7,5.5) --(.5,7); 
\draw[thick,->] (.3,4.5)-- (0, 5.2)--(.6,5.8);
\draw[thick] (.9,5.8)  node[below]{$C_4\simeq$} (-.5,5.8)node[below]{$\simeq C_3$};
\draw[thick](.6,5.8) -- (.5,7);
\end{tikzpicture}
\end{center}
\caption{Illustration of Lemma \ref{lem: main lemma}}
\end{figure}
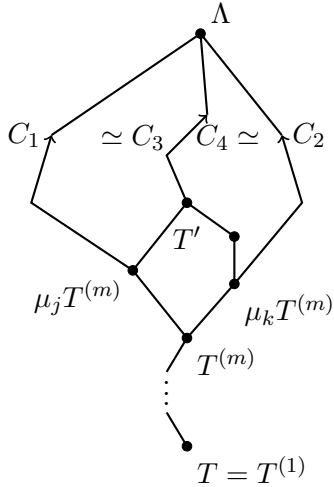

\begin{thm}[Garver-McConville \cite{GM}]\label{thm: no gap for finite type}
The No Gap Conjecture holds for all simply laced cluster tilted algebras of finite type.
\end{thm}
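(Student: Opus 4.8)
The plan is to obtain the theorem as an essentially immediate corollary of Lemma \ref{lem: main lemma}, with the simply laced hypothesis entering only to control the sizes of polygons. First I would invoke finiteness: since $\Lambda$ is cluster tilted of finite type, the oriented exchange graph $E(\Lambda)$ has only finitely many vertices, so in particular there are only finitely many cluster tilting objects $T'\ge \Lambda[1]$. Thus the hypothesis of Lemma \ref{lem: main lemma} is satisfied for the initial object $T=\Lambda[1]$. By the Corollary characterizing maximal green sequences as exactly those running from $\Lambda[1]$ to $\Lambda$, the set $\cM G(\Lambda[1],\Lambda)$ is precisely the set of all maximal green sequences, and Lemma \ref{lem: main lemma} tells us at once that this set is finite, nonempty, and that any two of its elements are joined by a finite chain of elementary polygonal deformations.

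Second, I would convert the conclusion ``all maximal green sequences lie in one polygonal deformation class'' into the No Gap statement, and the only additional input needed is the length accounting for a single elementary deformation. By Proposition \ref{prop: possible polygons} every polygon has one side of length $2$, and by Remark \ref{rem: 4 or 5} the simply laced case allows only polygons with $4$ sides (type $A_1\times A_1$, both sides of length $2$) or $5$ sides (type $A_2$, sides of length $2$ and $3$). Consequently a single elementary polygonal deformation, which replaces one side of such a polygon by the other in the middle of a maximal green sequence, changes the total number of green mutations by $0$ (across a $4$-gon) or by exactly $1$ (across a $5$-gon).

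Finally I would run a discrete intermediate value argument. Consider the graph whose vertices are the maximal green sequences and whose edges are elementary polygonal deformations; by the first step this graph is connected, and by the second step the length function changes by at most $1$ along each edge. Hence, given maximal green sequences of lengths $\ell_1<\ell_2$ and any integer $\ell$ with $\ell_1\le \ell\le \ell_2$, a path in the deformation graph joining two such witnesses produces a sequence of integers that begins at $\ell_1$, ends at $\ell_2$, and changes by at most $1$ at each step, so it must take the value $\ell$ somewhere; the corresponding vertex is a maximal green sequence of length $\ell$. Therefore the set of lengths has no gaps.

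I do not expect a serious obstacle here, precisely because the real work has already been isolated in Lemma \ref{lem: main lemma}, which supplies connectivity of the deformation class; the theorem is then bookkeeping. The only case-specific verification is that each elementary deformation alters the length by at most one, and this is exactly what Remark \ref{rem: 4 or 5} guarantees. The one point worth stating carefully is that the simply laced hypothesis is used \emph{only} to exclude the $6$-gon and $8$-gon polygons (types $B_2$ and $G_2$), whose longer sides would allow length jumps larger than $1$; this is precisely why the non-simply-laced examples of types $B_2$ and $G_2$ cited in the introduction violate the conjecture.
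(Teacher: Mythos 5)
Your proof is correct and follows essentially the same route as the paper: apply Lemma \ref{lem: main lemma} to $T=\Lambda[1]$ (using finite representation type to guarantee the finiteness hypothesis) to place all maximal green sequences in a single polygonal deformation class, then use Remark \ref{rem: 4 or 5} to conclude that each elementary polygonal deformation changes the length by at most one, so the set of lengths has no gaps. The only difference is that you spell out the discrete intermediate value argument and the polygon-size bookkeeping that the paper leaves implicit.
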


\begin{proof}
Maximal green sequences of a cluster tilted algebra $\Lambda$ are elements of $\cM G(\Lambda[1],\Lambda)$. If $\Lambda$ has finite representation type, all elements are polygonally deformable into each other by Lemma \ref{lem: main lemma}. In the simply laced case, all elementary polygonal deformations change the length of a maximal green sequence by at most one. So, the set of lengths will have no gaps.
\end{proof}

\begin{prop}\label{prop: finite comparable and MGS}
Suppose that $T$ has the property that there are only finitely many $T'$ which are comparable to $T$ ($T'\ge T$ or $T'\le T$). Then we have the following.

	\begin{enumerate}
	\item There exists a maximal green sequence going through $T$.
	\item There are only finitely many maximal green sequences going through $T$.
	\item Any two maximal green sequences going through $T$ lie in the same polygonal deformation class.
	\end{enumerate}

\end{prop}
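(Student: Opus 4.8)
The plan is to cut any maximal green sequence through $T$ at the point $T$ and treat the two halves separately. Since a maximal green sequence is exactly a green sequence from the source $\Lambda[1]$ to the sink $\Lambda$, and since (by the ``no return'' lemma) a green sequence meets each cluster at most once, such a sequence passes through $T$ exactly once and splits uniquely as a concatenation $a\cdot b$ with $a\in\cM G(\Lambda[1],T)$ and $b\in\cM G(T,\Lambda)$; conversely any such concatenation is a maximal green sequence through $T$. Thus it suffices to control $\cM G(\Lambda[1],T)$ and $\cM G(T,\Lambda)$ separately and then paste.

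First I would record that $\Lambda[1]\le T\le \Lambda$. The inequality $T\le\Lambda$ is exactly the opening observation in the proof of Lemma \ref{lem: main lemma}: as there are only finitely many $T'\ge T$, no green sequence out of $T$ can be infinite, so each terminates, and the only possible terminus is the unique sink $\Lambda$ (the sole cluster tilting object with all negative $c$-vectors). Dually, using that there are only finitely many $T'\le T$, every sequence of red mutations out of $T$ must terminate, necessarily at the unique source $\Lambda[1]$ (the sole cluster tilting object with all positive $c$-vectors); read backwards this is a green sequence from $\Lambda[1]$ to $T$, so $\Lambda[1]\le T$ and $\cM G(\Lambda[1],T)\neq\emptyset$.

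Next, the upper half $\cM G(T,\Lambda)$ is handled \emph{verbatim} by Lemma \ref{lem: main lemma}: it is finite, nonempty, and all its elements lie in a single polygonal deformation class. For the lower half $\cM G(\Lambda[1],T)$ I would prove the mirror image of that lemma, running the identical argument downward. Among all pairs $C_1\not\sim C_2$ in $\cM G(\Lambda[1],T)$ I would choose one with the longest common \emph{final} segment ending at $T$; the two sequences then first disagree just before some cluster $T^{(m)}$, reaching it through distinct mutations $\mu_j$ and $\mu_k$. Alternating $\mu_j,\mu_k$ downward from $T^{(m)}$ produces a polygon, finite because all of its clusters lie below $T^{(m)}\le T$ and there are only finitely many such; it descends from $T^{(m)}$ to some $T''$. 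Completing a green sequence from $\Lambda[1]$ up to $T''$ (possible since only finitely many clusters lie below $T''\le T$) and appending the two sides of the polygon followed by the common final segment produces $C_3\sim C_4$ agreeing more with $C_1,C_2$ respectively, so maximality forces $C_1\sim C_3\sim C_4\sim C_2$, a contradiction; finiteness is clear as in Lemma \ref{lem: main lemma}.

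Finally I would paste the halves. Existence (1) follows from nonemptiness of both factors; finiteness (2) follows because the maximal green sequences through $T$ biject with $\cM G(\Lambda[1],T)\times\cM G(T,\Lambda)$, a product of finite sets. For (3), given $a_1b_1$ and $a_2b_2$, I use that an elementary polygonal deformation (Definition \ref{def: polygonal deformation}) is local, replacing one side of a polygon in the middle of a sequence regardless of its prefix or suffix: deforming $b_1$ to $b_2$ within $\cM G(T,\Lambda)$ while holding $a_1$ fixed gives $a_1b_1\sim a_1b_2$, and deforming $a_1$ to $a_2$ within $\cM G(\Lambda[1],T)$ while holding $b_2$ fixed gives $a_1b_2\sim a_2b_2$, whence $a_1b_1\sim a_2b_2$. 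The hard part will be the lower half: making precise that the proof of Lemma \ref{lem: main lemma} genuinely dualizes, i.e. that the downward polygon stays inside the finite order ideal below $T$ and that ``polygonal deformation'' is symmetric under interchanging green and red mutations, so that the argument remains valid when anchored at the source $\Lambda[1]$ rather than the sink $\Lambda$.
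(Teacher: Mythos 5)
Your proposal is correct and follows essentially the same route as the paper: the paper's proof is exactly the bijection of maximal green sequences through $T$ with the product $\cM G(\Lambda[1],T)\times \cM G(T,\Lambda)$, together with the observation that polygonal deformations in each factor paste to a polygonal deformation of the concatenated sequences, with Lemma \ref{lem: main lemma} and its (unstated) dual supplying nonemptiness, finiteness, and the single deformation class in each factor. Your explicit dualization of Lemma \ref{lem: main lemma} for $\cM G(\Lambda[1],T)$ -- anchoring at the source $\Lambda[1]$, using longest common final segments and downward polygons -- is precisely the detail the paper leaves implicit, and your verification that it goes through is sound.
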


\begin{proof}
The set of maximal green sequences which go through $T$ is in bijection with the product $\cM G(\Lambda[1],T)\times \cM G(T,\Lambda)$ which is finite and nonempty. Given two, $C_1=(C_1',C_1'')$ and $C_2=(C_2',C_2'')$, polygonal deformations $C_1'\sim C_2'$ and $C_1''\sim C_2''$ give a polygonal deformation $C_1\sim C_2$.
\end{proof}

\subsection{Regular cluster tilting objects} We now specialize to tame algebras $\Lambda$ with unique null root $\eta$. We will show that every maximal green sequence for $\Lambda$ passes through some regular object and any two sequences which pass through the same object are polygonally equivalent.

Let $\cP$ denote the set of all (indecomposable) preprojective objects of $\rmod\Lambda$ and let $\cR$ denote the set of all rigid indecomposable regular objects. We consider $\cP,\cR$ to be subsets of the set of exceptional objects of $\cC_\Lambda$, the cluster category of $\Lambda$. Let $\cJ$ be the set of all other exceptional objects of $\cC_\Lambda$. Thus, $\cJ$ consists of the indecomposable preinjective and shifted projective objects $P_i[1]$.

\begin{defn}
A cluster tilting object will be called \emph{regular} if it contains at least one component in $\cP$ and one component in $\cJ$.
\end{defn}

\begin{lem}
A cluster tilting object $T$ for tame $\Lambda$ has at most $n-2$ regular components.
\end{lem}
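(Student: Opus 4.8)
The plan is to reduce the bound to the internal structure of the stable tubes. For tame $\Lambda$ the regular part of $\rmod\Lambda$ is a separating family of stable tubes, all but finitely many of which are homogeneous, and no rigid indecomposable can lie in a homogeneous tube (there every indecomposable self-extends). Thus every component of $T$ belonging to $\cR$ lies in one of the finitely many non-homogeneous tubes $\mathcal{T}_1,\dots,\mathcal{T}_s$, of ranks $r_1,\dots,r_s\ge 2$. Since distinct tubes are Hom- and Ext-orthogonal, two regular components of $T$ lying in different tubes are automatically compatible, so the number of regular components of $T$ equals the sum, over $i$, of the number of components of $T$ lying in $\mathcal{T}_i$.

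First I would bound the contribution of each tube. Inside $\mathcal{T}_i$ the rigid indecomposables are exactly those of regular length at most $r_i-1$ (an indecomposable of regular length $\ell$ in a rank-$r$ tube is rigid if and only if $\ell\le r-1$, since otherwise it has a nonzero self-extension), and any pairwise compatible---that is, rigid---collection of these has at most $r_i-1$ terms. This is the statement that a maximal rigid object of a rank-$r$ tube has exactly $r-1$ indecomposable summands, for which I would cite the description of rigid objects in a tube from \cite{BHIT}; combinatorially it mirrors the cluster combinatorics of type $A_{r-1}$. Hence $T$ has at most $r_i-1$ components in $\mathcal{T}_i$.

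Finally I would invoke the standard numerical identity for the tubular type of a tame hereditary algebra of rank $n$, namely $\sum_{i=1}^s (r_i-1)=n-2$. Summing the per-tube bounds then yields that $T$ has at most $\sum_{i=1}^s(r_i-1)=n-2$ regular components, as claimed. The main obstacle is the per-tube bound of $r_i-1$: the orthogonality of distinct tubes and the identity $\sum(r_i-1)=n-2$ are classical structural facts about tame hereditary algebras, so the genuine content lies in the combinatorics of compatible rigid objects within a single tube, which is where I would rely most heavily on \cite{BHIT}.
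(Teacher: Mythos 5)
Your proof is correct, but it takes a genuinely different route from the paper. The paper's argument is a short root-theoretic one: any cluster tilting object with $n-1$ regular components would contain an almost complete cluster tilting object $T_1\oplus\cdots\oplus T_{n-1}$ with all $T_i$ regular; the real Schur root $\beta$ satisfying $\brk{\undim T_i,\beta}=0$ for all $i$ is unique up to scalar, and since regular dimension vectors are orthogonal to the null root, $\beta$ would have to be proportional to $\eta$ --- impossible, as $\eta$ is not a real root. Your argument instead invokes the full Auslander--Reiten structure of the regular category: the separating family of stable tubes, the absence of rigid indecomposables in homogeneous tubes, the bound of $r-1$ pairwise compatible rigid indecomposables in a rank-$r$ tube, and the tubular-type identity $\sum_i(r_i-1)=n-2$. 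Both proofs are valid over an arbitrary base field (the tube structure and the identity $\sum(r_i-1)=n-2$ hold in the valued case by Dlab--Ringel \cite{DR}). What your approach buys is finer information --- it locates the regular summands tube by tube and shows the bound $n-2$ is exactly the size of a maximal rigid regular object --- at the cost of considerably heavier machinery; the paper's approach buys brevity and self-containment, since it reuses only the orthogonality property of almost complete cluster tilting objects already present in its toolkit. One small caveat: the per-tube bound (maximal rigid objects in a rank-$r$ tube have exactly $r-1$ summands) is classical tube theory (e.g.\ Buan--Marsh--Vatne and standard references on tame hereditary algebras) rather than something you should expect to find in \cite{BHIT}, so that citation should be redirected.
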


\begin{proof}
We know that, for any almost complete cluster tilting object $T=T_1\oplus \cdots\oplus T_{n-1}$, there is a real Schur roots $\beta$ so that $\brk{\undim T_i,\beta}=0$ for all $i$. If all $T_i$ were regular, then $\beta=\eta$, the null root, is (up to multiplication by a scalar) the unique solution of the linear equation $\brk{\undim T_i,\eta}=0$ for all $i$. This is impossible since $\eta$ is not a real root.
\end{proof}

Since every maximal green sequence starts with all components in $\cJ$ and ends with all components in $\cP$, there is an object $T$ in the sequence which contains at least one of each. 

\begin{prop}\label{prop: every MGS goes through a regular T}
Every maximal green sequence passes through a regular cluster.\qed
\end{prop}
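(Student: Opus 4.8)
The plan is to run a discrete intermediate-value argument on the counters that record how many components of each cluster tilting object in the sequence lie in $\cP$ and how many lie in $\cJ$. Write a maximal green sequence as $\Lambda[1]=T^{(0)},T^{(1)},\dots,T^{(N)}=\Lambda$, where each $T^{(i+1)}=\mu_{k_i}T^{(i)}$ differs from its predecessor in exactly one summand. For a cluster tilting object $T$ let $p(T)$, $r(T)$, and $j(T)$ be the number of its components lying in $\cP$, in $\cR$, and in $\cJ$ respectively, so that $p(T)+r(T)+j(T)=n$. Since $\Lambda[1]$ consists of the $n$ shifted projectives, all in $\cJ$, we have $p(T^{(0)})=0$; and since $\Lambda$ consists of the $n$ projectives, all preprojective and hence in $\cP$, we have $p(T^{(N)})=n$. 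Because a single mutation replaces only one summand, each counter changes by at most $1$ from one step to the next.

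First I would locate the smallest index $i$ with $p(T^{(i)})\ge 1$. Such an index exists because $p(T^{(N)})=n\ge 1$, and $i\ge 1$ because $p(T^{(0)})=0$. Since $p$ increases by at most $1$ per step and $p(T^{(i-1)})=0$, we must have $p(T^{(i)})=1$, so $T^{(i)}$ has at least one component in $\cP$. It then remains to show that $T^{(i)}$ also has a component in $\cJ$, i.e. that $j(T^{(i)})\ge 1$; granting this, $T^{(i)}$ is a regular cluster tilting object and the sequence passes through it.

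The key step, and the one genuinely requiring input beyond bookkeeping, is this last claim, and it is exactly where the preceding lemma enters. Suppose instead that $j(T^{(i)})=0$. Then all $n$ components of $T^{(i)}$ lie in $\cP\cup\cR$, and since exactly one of them lies in $\cP$, the remaining $n-1$ components are rigid indecomposable regular objects, giving $r(T^{(i)})=n-1$. This contradicts the lemma bounding the number of regular components of a cluster tilting object for tame $\Lambda$ by $n-2$. Hence $j(T^{(i)})\ge 1$, which finishes the argument.

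The main obstacle to watch for is precisely this degenerate transition at the boundary: without the bound $r\le n-2$, the naive intermediate-value reasoning fails, since a single green mutation could in principle replace the last $\cJ$-component directly by a $\cP$-component, passing from a cluster with $p=0,\ j\ge 1$ to one with $p\ge 1,\ j=0$ without ever producing a cluster containing one component of each type. The lemma rules this out by forbidding the intermediate configuration $p=1,\ j=0,\ r=n-1$, and this is the only nontrivial ingredient the proof requires.
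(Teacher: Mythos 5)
Your proof is correct and is essentially the paper's own argument: the paper establishes the same lemma bounding the number of regular ($\cR$) components of a cluster tilting object by $n-2$, and then notes in a single sentence that since a maximal green sequence starts with all components in $\cJ$ and ends with all components in $\cP$, some object in the sequence must contain one of each. Your write-up simply makes explicit the first-crossing bookkeeping and the role of the $n-2$ bound in excluding the degenerate transition $p=1,\ j=0,\ r=n-1$, which is exactly the implicit content of the paper's one-line proof.
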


\subsection{Maximal green sequences through regular objects}

In this subsection we will show that every regular cluster tilting object $T$ lies in some maximal green sequence and that any two such sequences are polygonally equivalent. To prove this, we need to recall some constructions from \cite{BHIT}.

By Dlab-Ringel \cite{DR}, there is a positive integer $m$ so that for any preprojective root $\alpha$ or preinjective root $\beta$ we have
	\[
	\tau^{-m}\alpha=\alpha+\delta(\alpha)\eta \quad \text{ and } \quad \tau^m\beta=\beta+\delta(\beta)\eta
	\]
where $\delta(\gamma)$ is the \emph{defect} of $\gamma$, $\eta$ is the null root, and $\tau=-E^{-1}E^t$ is Auslander-Reiten translation.

Let $\cP_m$ be the set of all preprojective roots $\alpha$ with the property that $\tau^k\alpha$ is a projective root for some $0\le k<m$. Let $\cI_m$ be the set of all preinjective roots $\beta$ with the property that $\tau^{-k}\beta$ is injective for some $0\le k<m$. Recall from \cite{BHIT} the following subsets of $\RR^n$:
	\[
	\cV_m:=\{x\in\RR^n: \brk{x,\beta} \ge0 \text{ for all }\beta\in\cI_m\}
	\]
	\[
	\cW_m:=\{x\in\RR^n:  \brk{x,\alpha} >0 \text{ for some }\alpha\in\cP_m\}.
	\]
Let $H(\eta)$ be the hyperplane in $\RR^n$ given by the equation $\brk{x,\eta}=0$ and let
	\[
	D(\eta)=\{x\in H(\eta): \brk{x,\alpha} \le0 \text{ for all preprojective }\alpha\}.
	\]
For any cluster tilting object $T=T_1\oplus\cdots\oplus T_n$ let $R(T)\subseteq \RR^n$ be the set
	\[
	R(T)=\left\{\sum a_i\undim T_i: a_i\ge0\right\}.
	\]

\begin{rem}\label{rem: D(eta) is convex}
Note that $D(\eta), \cV_m$ and $\RR^n\backslash\cW_m$ are convex. Also, $\eta\in D(\eta)$ and $-\eta\in H(\eta)\setminus D(\eta)$ since
$
	\brk{-\eta,\undim P}=\brk{\undim P,\eta} > 0
$
for all projective modules $P$. Furthermore, $D(\eta)$ contains all regular roots $\rho=\undim R$ where $R\in\cR$ since these are exactly the roots which lie on $H(\eta)$ and satisfy $\Hom_\Lambda(R,M)=0$ for all preprojective $M$ (making $\brk{\rho,\alpha}\le0$ for $\alpha$ preprojective).
\end{rem}

We recall the following properties of these regions.

\begin{prop}[See \cite{BHIT}, Section 4]\label{prop: regions}
	
	\begin{enumerate}
	\item $\cV_m\cap H(\eta)=D(\eta)$
	\item $\cW_m\cap H(\eta)=H(\eta)\setminus D(\eta)$.
	\item For all $T$, the interior of $R(T)$ lies either entirely in $\cV_m$ or in its complement.
	\item For all $T$, the interior of $R(T)$ lies either entirely in $\cW_m$ or in its complement.
	\item If $R(T)$ lies in $\cV_m$ (or $\cW_m$) and $R(\mu_kT)$ lies outside $\cV_m$ (or $\cW_m$) then $\mu_k$ is a green mutation (in both cases).
	\item $\cV_m\setminus \cW_m$ contains $R(T)$ for all but finitely many cluster tilting objects $T$.
	\item For all $T$, the interior of $R(T)$ is disjoint from $D(\eta)$.
	\item $\tau D(\eta)=D(\eta)$.
	\end{enumerate}
	
\end{prop}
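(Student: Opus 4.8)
My plan is to split the eight items into a ``linear-algebraic'' group living on the hyperplane $H(\eta)$, namely (1), (2) and (8), and a ``fan-geometric'' group, namely (3)--(7). The single engine behind the first group is the $\tau^m$-periodicity of the Euler form on $H(\eta)$, so I would establish that first. Since $\tau=-E^{-1}E^t$ is an Euler isometry, $\brk{\tau x,\tau y}=\brk{x,y}$, and since $\tau\eta=\eta$ we get $\tau H(\eta)=H(\eta)$ and $\brk{x,\tau y}=-\brk{y,x}$. Combining $\tau\eta=\eta$ with the Dlab--Ringel formulas gives, for $x\in H(\eta)$, $\brk{x,\tau^{qm}\beta}=\brk{x,\beta}$ for every preinjective $\beta$ and $\brk{x,\tau^{-qm}\alpha}=\brk{x,\alpha}$ for every preprojective $\alpha$. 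As every preinjective root is $\tau^{qm}\beta'$ for some $\beta'\in\cI_m$, $q\ge0$ (and dually for $\cP_m$), the infinite conditions ``$\brk{x,\beta}\ge0$ for all preinjective $\beta$'' and ``$\brk{x,\beta}\ge0$ for all $\beta\in\cI_m$'' coincide on $H(\eta)$, and similarly for $\cP_m$.

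Item (2) then falls out at once: on $H(\eta)$ the set $\cW_m$ is the locus where some preprojective pairing is positive, which is exactly $H(\eta)\setminus D(\eta)$. Item (1) needs the same reduction for $\cV_m$ together with a genuine representation-theoretic fact: on $H(\eta)$ the inequalities $\brk{x,\beta}\ge0$ over preinjective $\beta$ and $\brk{x,\alpha}\le0$ over preprojective $\alpha$ cut out the \emph{same} cone, namely the one generated by the dimension vectors of the indecomposable regular modules. That regular roots lie in both descriptions is the content of the Remark; I would get the two reverse inclusions from the structure of the tube category --- the quasi-simples of each tube sum to $\eta$ and their dimension vectors span $H(\eta)$ --- which is where I would lean on \cite{BHIT}. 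Item (8) is again periodicity: $\tau^{-1}$ carries the preprojective roots onto the non-projective preprojective roots, while on $H(\eta)$ the projective inequalities are already implied by deep preprojective ones (since $\brk{x,\undim P_i}=\brk{x,\tau^{-m}\undim P_i}$); hence $\brk{\tau x,\alpha}=\brk{x,\tau^{-1}\alpha}\le0$ for all preprojective $\alpha$ is equivalent to $x\in D(\eta)$, giving $\tau D(\eta)=D(\eta)$.

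For the fan group the guiding principle is that the cones $R(T)$ are simplicial (the $\undim T_i$ form a basis) and assemble into a fan whose walls are the hyperplanes $\brk{\cdot,\gamma}=0$ for real Schur roots $\gamma$, with $\partial\cV_m,\partial\cW_m$ subcomplexes of it. The key lemma I must import is wall--cone compatibility: a hyperplane $\brk{\cdot,\gamma}=0$ never slices through $\mathrm{int}\,R(T)$, so it is disjoint from the open cone or contains a face. Granting this, (3) and (4) are immediate, as $\cV_m,\cW_m$ are built from finitely many such halfspaces. For (5), if $R(T)\subseteq\cV_m$ while $R(\mu_kT)$ leaves $\cV_m$, then the facet shared by $R(T)$ and $R(\mu_kT)$ lies on a wall $\brk{\cdot,\beta}=0$ with $\beta\in\cI_m$, crossed from the $\brk{\cdot,\beta}\ge0$ side; by the $c$-Vector Theorem and sign-coherence this says the $k$-th $c$-vector of $T$ is positive, i.e.\ $\mu_k$ is green (the $\cW_m$ case is identical with a preprojective wall).

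Finally (6) and (7) locate the clusters relative to $H(\eta)$, and this is where the real work sits. For (6) I would argue via (3)--(4) that $R(T)\not\subseteq\cV_m\setminus\cW_m$ only when $\mathrm{int}\,R(T)$ lies beyond one of the finitely many walls from $\cP_m\cup\cI_m$; since only finitely many clusters cross a fixed wall (the finiteness already used in \cite{BHIT} to bound maximal green sequences), only finitely many $T$ are excluded. I expect (7) to be the main obstacle. It cannot be proved by any halfspace or defect argument, because $D(\eta)$ sits in the genuine \emph{interior} of the slab $\cV_m\setminus\cW_m$ (already visible in rank $2$, where the slab is full-dimensional and straddles $H(\eta)$). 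One checks easily that if $T$ has no shifted projective summand then $x\in\mathrm{int}\,R(T)$ forces $\brk{x,\undim T_p}=\sum a_i\brk{\undim T_i,\undim T_p}>0$ for a preprojective summand $T_p$ (rigidity makes the off-diagonal terms nonnegative and $\brk{\undim T_p,\undim T_p}=f_p>0$), contradicting $x\in D(\eta)$; but shifted projective summands destroy this positivity. The correct argument must instead exploit that $D(\eta)$ is the accumulation locus of the fan: using (8) and the action of $\tau$ on cluster tilting objects of $\cC_\Lambda$, an interior intersection $\mathrm{int}\,R(T)\cap D(\eta)$ would propagate to the infinitely many clusters $\tau^kT$ and collide with local finiteness of the fan near a regular ray. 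Pinning down this accumulation statement --- that the cones $R(T)$ approach $D(\eta)$ but no interior ever reaches it --- together with the duality behind (1), is the crux of the proposition.
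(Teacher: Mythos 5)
You should know at the outset that the paper never proves this proposition: it is imported verbatim from \cite{BHIT}, Section 4, so your attempt can only be measured against what a correct proof actually requires. Your periodicity engine for (1), (2), (8) is sound and is indeed the standard argument: on $H(\eta)$ the Dlab--Ringel formulas collapse the infinite families of preprojective/preinjective inequalities to the finite families indexed by $\cP_m,\cI_m$, and your reduction of the projective inequalities to deep preprojective ones correctly yields $\tau D(\eta)=D(\eta)$. But by your own account you leave the duality behind (1) and all of (7) open (``the crux''), so the proposal is already incomplete. Worse, the fix for (7) is elementary and needs no ``accumulation'' argument: if $T$ has a shifted projective summand $P_l[1]$, then $\brk{-\undim P_l,\undim I_l}=-\dim_K\Hom(P_l,I_l)<0$ with $\undim I_l\in\cI_m$, so by (3) the interior of $R(T)$ misses $\cV_m$, which contains $D(\eta)$ by (1); if $T$ has no shifted summand but meets $H(\eta)$, it must have a preprojective summand and your own positivity argument applies. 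Splitting into these two cases closes (7) completely.

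The fan-geometric half rests on a false key lemma. Full hyperplanes $\brk{\,\cdot\,,\gamma}=0$ \emph{do} slice through interiors of cluster cones. Take the Kronecker algebra (two arrows $2\to 1$, so $E=\mat{1&0\\-2&1}$) and $T=P_1\oplus P_2[1]$, so that $R(T)$ is spanned by $(1,0)$ and $-\undim P_2=(-2,-1)$: the interior point $(0,-1)=2(1,0)+(-2,-1)$ satisfies $\brk{(0,-1),\undim I_1}=0$, where $\undim I_1=(1,2)\in\cI_m$. What is true, and is the real content of \cite{BHIT}, is that $\partial\cV_m$ is contained in the union of the \emph{semi-invariant domains} $D(\beta)\subsetneq H(\beta)$, $\beta\in\cI_m$ (cut out by additional stability inequalities), and that those domains, not the hyperplanes, never cross interiors of cluster cones; neither fact is ``immediate,'' and your derivation of (3)--(5) collapses without them. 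Your item (5) has a second defect: the sign-chase is asserted, never performed, and it comes out backwards. In the same example $R(\Lambda)\subseteq\cV_m$ (always true, since $\brk{\undim P_i,\beta}=\dim_K\Hom(P_i,M_\beta)\ge0$), and $R(\mu_2\Lambda)=R(P_1\oplus P_2[1])$ lies outside $\cV_m$, yet every mutation of $\Lambda$ is red (its $C$-matrix is $-I_n$, as the paper itself notes); the green mutation joining these two clusters is the one pointing \emph{into} $\cV_m$ (it is the last step of the length-two maximal green sequence). So green mutations cross $\partial\cV_m$ and $\partial\cW_m$ inward, the opposite of what you claim your c-vector computation ``says,'' and getting this orientation right is precisely what the subsequent lemmas of the paper depend on. Finally, your argument for (6) --- ``only finitely many clusters cross a fixed wall, the finiteness already used in \cite{BHIT}'' --- assumes essentially the statement being proved. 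In sum: (1), (3), (4), (5), (6), (7) are either deferred, circular, or derived from a false lemma, so the proposal does not constitute a proof.
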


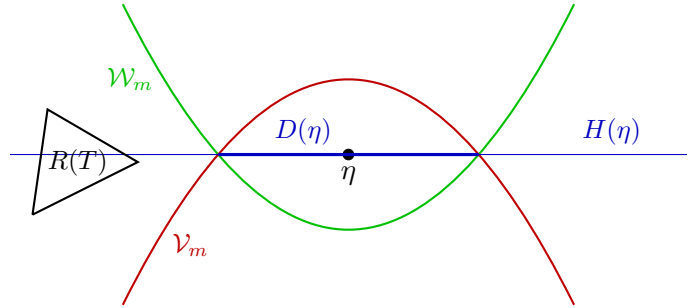
\begin{figure}[h]
\begin{center}
\begin{tikzpicture}%[scale=3]

%\draw[help lines=1,thick] (-5,-5) grid (5,4);
%\draw[help lines=.2,thin] (-5,-5) grid (5,4);
\draw[thick,color=\green] (-3,2)..controls (-1,-2) and (1,-2)..(3,2);
\draw[thick,color=\red] (-3,-2)..controls (-1,2) and (1,2)..(3,-2);
%\foreach \x in {-8,-6,...,8}\draw (\x,0) node{\x};\foreach \y in {-6,-4,...,8}\draw (0,\y) node{\y};
%\draw[thick,color=blue] (0,1) ellipse [x radius=2.8cm,y radius=2.1cm];
\begin{scope}%[xshift=-4cm,yshift=10cm]
	\draw[fill] (0,0) circle[radius=2pt] (0,-.02)node[below]{$\eta$};
	\draw[very thick,color=\blue] (-1.73,0)--(1.73,0);
	\draw[color=\blue] (-.6,0) node[above]{\small$D(\eta)$};
	\draw[color=\blue] (3.5,0) node[above]{\small$H(\eta)$};
\end{scope}
\draw[color=\blue] (-4.5,0)--(4.5,0);
\begin{scope}
	\draw[thick] (-4.2,-.8)--(-2.8,-.1)--(-4,.6)--(-4.2,-.8);
	\draw (-3.6,-.1) node{\small$R(T)$};
\end{scope}
\draw[color=\red] (-2.1,-1.2)node{\small$\cV_m$};
\draw[color=\green] (-2.9,1)node{\small$\cW_m$};

\end{tikzpicture}
\end{center}
\caption{Illustration of Proposition \ref{prop: regions}}
\end{figure}

\begin{lem}\label{lem: regular cones}
If $T$ is a regular cluster tilting object then the interior of $R(T)$ meets $H(\eta)$ and therefore lies in $\cW_m$ and is disjoint from $\cV_m$. In particular, $R(T)$ is on the red side of $\partial \cV_m$ and on the green side of $\partial \cW_m$. Furthermore, there are at most finitely many such $T$.
\end{lem}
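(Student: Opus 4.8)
The plan is to detect the geometry with the single linear functional $f(x)=\brk{x,\eta}$, whose kernel is exactly $H(\eta)$. The first step is to record the sign of $f$ on the dimension vector of each type of indecomposable component. Using the defect function $\delta$ of Dlab--Ringel together with the null-root identity $\brk{\eta,x}=-\brk{x,\eta}$, one has $\delta(\gamma)=\brk{\eta,\gamma}=-f(\gamma)$; since preprojective roots have negative defect and preinjective roots positive defect, this gives $f(\undim T_i)>0$ for preprojective components and $f(\undim T_i)<0$ for preinjective ones. The computation $\brk{\undim P,\eta}=\brk{-\eta,\undim P}>0$ from Remark~\ref{rem: D(eta) is convex} fixes the sign, and since $\undim P_i[1]=-\undim P_i$ we get $f(\undim P_i[1])<0$ as well; hence every component in $\cJ$ has $f<0$, while regular components lie on $H(\eta)$, i.e.\ $f=0$.

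Next, let $T=T_1\oplus\cdots\oplus T_n$ be regular, so that it has a component in $\cP$ (on which $f>0$) and a component in $\cJ$ (on which $f<0$). The vectors $\undim T_i$ form a basis of $\RR^n$ (the matrix $V$ of the $c$-Vector Theorem is invertible), so the interior of $R(T)$ is $\{\sum a_i\undim T_i:a_i>0\}$ and $f$ restricted to it is $\sum a_i f(\undim T_i)$. Because $f$ assumes both a strictly positive and a strictly negative value among the generators, I can solve for positive coefficients making this sum vanish: fixing the remaining coefficients at a small $\epsilon>0$ and varying the two distinguished positive coefficients sweeps the value of $f$ across all of $\RR$. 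This yields a point $x_0$ in the interior of $R(T)$ with $f(x_0)=0$, i.e.\ $x_0\in H(\eta)$, which proves the first assertion.

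The location of the whole cone then follows by feeding $x_0$ into Proposition~\ref{prop: regions}. By part~(7) the interior of $R(T)$ is disjoint from $D(\eta)$, so $x_0\notin D(\eta)$. If $x_0$ were in $\cV_m$ then, being in $H(\eta)$, it would lie in $\cV_m\cap H(\eta)=D(\eta)$ by part~(1), a contradiction; thus $x_0\notin\cV_m$. Similarly part~(2) gives $x_0\in H(\eta)\setminus D(\eta)=\cW_m\cap H(\eta)\subseteq\cW_m$. Parts~(3) and~(4) say the interior of $R(T)$ lies wholly inside or wholly outside each of $\cV_m$ and $\cW_m$; since the single interior point $x_0$ is outside $\cV_m$ and inside $\cW_m$, the entire interior is disjoint from $\cV_m$ and contained in $\cW_m$, giving the stated conclusion that $R(T)$ lies on the red side of $\partial\cV_m$ and the green side of $\partial\cW_m$.

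Finiteness is then immediate from Proposition~\ref{prop: regions}(6): for regular $T$ the interior of $R(T)$ meets $\cW_m$, so $R(T)\not\subseteq\cV_m\setminus\cW_m$, and (6) asserts that only finitely many cluster tilting objects fail the containment $R(T)\subseteq\cV_m\setminus\cW_m$. I expect the only delicate point to be pinning down the defect sign conventions so that the $\cP$- and $\cJ$-components genuinely separate $H(\eta)$; once that bookkeeping is correct, the interior-point construction and the three applications of Proposition~\ref{prop: regions} are routine.
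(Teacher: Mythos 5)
Your proof is correct and takes essentially the same route as the paper's: the paper likewise observes that a regular $T$ has summands on both sides of $H(\eta)$, so the interior of $R(T)$ meets $H(\eta)$, and then applies parts (7), (1), (3), (2), (4), and (6) of Proposition \ref{prop: regions} exactly as you do. Your sign bookkeeping for $\brk{\,\cdot\,,\eta}$ on preprojective and preinjective components and the explicit interior point with $\brk{x_0,\eta}=0$ simply fill in details the paper leaves implicit, and your anchoring of the signs via Remark \ref{rem: D(eta) is convex} is sound.
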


\begin{proof}
Since $T$ contains a preprojective and preinjective (or shifted projective) summand, $R(T)$ has points on both sides of the hyperplane $H(\eta)$. So, $H(\eta)$ meets the interior of $R(T)$. By (7) in the proposition above, the interior of $R(T)$ meets $H(\eta)\setminus D(\eta)$. By (1) and (3), this implies that the interior of $R(T)$ is disjoint from $\cV_m$. By (2) and (4), the interior of $R(T)$ is contained in $\cW_m$. By (6) there are only finitely many such $T$.
\end{proof}

\begin{lem}
For any regular cluster $T$ there are only finitely many $T'$ comparable with $T$.
\end{lem}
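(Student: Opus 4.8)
The plan is to confine every cluster tilting object $T'$ comparable with $T$ to the finite collection of ``exceptional'' clusters supplied by Proposition \ref{prop: regions}(6), namely those $T'$ for which $R(T')\not\subseteq\cV_m\setminus\cW_m$. This immediately gives finiteness and, feeding into Proposition \ref{prop: finite comparable and MGS}, the properties needed in the sequel. I would treat the two halves $\{T':T'\ge T\}$ and $\{T':T'\le T\}$ separately, and in each half combine a monotonicity of wall–crossing along green sequences with the location of the regular cluster $T$ recorded in Lemma \ref{lem: regular cones}.

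First I would establish the monotonicity. By Proposition \ref{prop: regions}(3)--(4) the interior of every cone lies cleanly on one side of each wall $\partial\cV_m$ and $\partial\cW_m$, so the side of a cluster is well defined. By part (5), a mutation that carries $R(\cdot)$ across one of these walls is green in one of its two directions and, being the reverse of a green mutation, red in the other. Since a green sequence contains no red mutation, along such a sequence the side of each wall can change at most once, and always in a single sense. Comparing the endpoints $\Lambda[1]$, whose cone lies outside both $\cV_m$ and $\cW_m$, with $\Lambda$, whose cone lies inside both, this sense is determined: along a green sequence a cone may enter $\cV_m$ and may enter $\cW_m$, but once inside either region it can never leave.

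With this in hand the two cases are short. Suppose $T'\le T$, so there is a green sequence $T'\to\cdots\to T$. By Lemma \ref{lem: regular cones} the terminal cluster $T$ is disjoint from $\cV_m$; since a green sequence never leaves $\cV_m$, any earlier cluster lying inside $\cV_m$ would force $T$ to lie inside as well, so $R(T')$ must be disjoint from $\cV_m$ and hence $R(T')\not\subseteq\cV_m\setminus\cW_m$. Suppose instead $T'\ge T$, so there is a green sequence $T\to\cdots\to T'$. By Lemma \ref{lem: regular cones} the cone $R(T)$ lies in $\cW_m$; since the sequence cannot leave $\cW_m$, the later cone $R(T')$ also meets $\cW_m$, and again $R(T')\not\subseteq\cV_m\setminus\cW_m$. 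In either case $T'$ is one of the finitely many exceptional clusters of Proposition \ref{prop: regions}(6), so only finitely many $T'$ are comparable with $T$.

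I expect the single real point to be the monotonicity step: one must know not merely that a green sequence crosses each wall at most once, but in which sense, and one must pair each half of the comparability set with the correct wall — $\partial\cV_m$ for $T'\le T$ and $\partial\cW_m$ for $T'\ge T$ — so that the known side of $T$ constrains $T'$ in the correct temporal direction. Both of these inputs rest on Proposition \ref{prop: regions}(5) together with the fact that a mutation and its reverse carry opposite colours.
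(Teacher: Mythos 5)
Your proof is correct, and it is in essence the paper's own argument: locate the regular cluster $T$ via Lemma \ref{lem: regular cones}, show that green sequences can cross $\partial\cV_m$ and $\partial\cW_m$ in only one direction, and conclude from Proposition \ref{prop: regions}(6) that everything comparable with $T$ avoids $\cV_m\setminus\cW_m$. The one substantive difference is the pairing of the two halves of the comparability set with the two regions, and here your version is the right one while the paper's text has them swapped. The paper asserts that every $T'\ge T$ stays on the red side of $\partial\cV_m$ (outside $\cV_m$) and every $T''\le T$ stays on the green side of $\partial\cW_m$ (inside $\cW_m$). But under the conventions forced by the rest of the paper, a green mutation crosses a wall from the side where $\brk{x,\beta}<0$ to the side where $\brk{x,\beta}>0$ ($\beta$ being the positive $c$-vector), so both $\cV_m$ and $\cW_m$ can be entered but never exited along a green sequence; consequently the paper's two assertions cannot hold as written, since $\Lambda\ge T$ with $R(\Lambda)$ inside $\cV_m$, and $\Lambda[1]\le T$ with $R(\Lambda[1])$ outside $\cW_m$. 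Your pairing --- successors trapped inside $\cW_m$, predecessors trapped outside $\cV_m$ --- is the correct repair, and your determination of the sense of absorption by comparing the endpoints $\Lambda[1]$ (outside both regions) and $\Lambda$ (inside both) supplies exactly the step that the paper compresses, misleadingly here, into the words ``red side'' and ``green side.'' The only point you assert without justification is the location of those endpoints; this is routine, since $\brk{\undim P_i,\gamma}=\dim_K\Hom_\Lambda(P_i,M)\ge 0$ for any module $M$ with dimension vector $\gamma$, whence the interior of $R(\Lambda)$ lies in both regions and the interior of $R(\Lambda[1])$ in neither, but it deserves a sentence since the whole direction argument rests on it.
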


\begin{proof}
Suppose that $T'\ge T$. Then there is a green mutation from $T$ to $T'$. Since $T$ is on the red side of $\partial\cV_m$, $T'$ is also on the red side of $\partial\cV_m$ and therefore in the complement of $\cV_m$. By (6) in the above proposition there are only finitely many such $T'$.

Similarly, $T$ is on the green side of $\partial \cW_m$ and therefore so is any $T''\le T$. So, the interior of $R(T'')$ lies in $\cW_m$. Again there are only finitely many such $T''$ by (6) above.
\end{proof}

By Proposition \ref{prop: finite comparable and MGS} this implies the following.

\begin{prop}\label{prop: all MGS though T are equivalent}
Suppose $T$ is a regular cluster tilting object.
\begin{enumerate}
\item There exists a maximal green sequence going through $T$.
\item There are only finitely many maximal green sequences going through $T$.
\item Any two maximal green sequences going through $T$ lie in the same polygonal deformation class.\qed
\end{enumerate}
\end{prop}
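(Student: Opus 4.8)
The plan is to obtain all three statements as an immediate corollary of Proposition \ref{prop: finite comparable and MGS}. That proposition has a single hypothesis — that only finitely many cluster tilting objects $T'$ are comparable to $T$ — and its three conclusions are word-for-word the three statements asserted here. So the entire task reduces to checking that a \emph{regular} cluster tilting object satisfies this finiteness hypothesis, after which conclusions (1)--(3) transfer verbatim.

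First I would verify the finiteness of comparable objects, which is exactly the content of the lemma immediately preceding the proposition. The argument runs through the geometry already set up: by Lemma \ref{lem: regular cones} the interior of $R(T)$ meets $H(\eta)$, which forces $R(T)$ onto the red side of $\partial\cV_m$ and the green side of $\partial\cW_m$. Any $T'\ge T$ is reached from $T$ by green mutations, so it stays on the red side of $\partial\cV_m$ and therefore lies outside $\cV_m$; dually, any $T''\le T$ lies inside $\cW_m$. By part (6) of Proposition \ref{prop: regions}, each of these two families is finite, so the full set of objects comparable to $T$ is finite.

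With the hypothesis in hand I would simply invoke Proposition \ref{prop: finite comparable and MGS}. For the record, the mechanism it packages — where the genuine content resides — is that the maximal green sequences through $T$ biject with $\cM G(\Lambda[1],T)\times\cM G(T,\Lambda)$; both factors are finite and nonempty by (the reasoning of) Lemma \ref{lem: main lemma}, applied directly to $\cM G(T,\Lambda)$ and dually to $\cM G(\Lambda[1],T)$, and a pair of polygonal deformations of the two factors concatenates to a polygonal deformation of the whole sequence. This yields existence (1), finiteness (2), and the polygonal-deformation statement (3) at once.

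I do not expect a real obstacle at this level: the difficulty has already been absorbed upstream, into the geometric finiteness of Lemma \ref{lem: regular cones} together with Proposition \ref{prop: regions}(6), and into Lemma \ref{lem: main lemma}. The only point deserving a moment's care is the two-factor structure behind Proposition \ref{prop: finite comparable and MGS} — namely that the lower half $\cM G(\Lambda[1],T)$ is governed by the same argument as the upper half $\cM G(T,\Lambda)$ after exchanging the roles of green and red. Once that symmetry is granted, the deduction from a regular $T$ to all of (1)--(3) is purely formal.
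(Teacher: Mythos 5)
Your proposal is correct and follows exactly the paper's route: the finiteness of objects comparable to a regular $T$ is established just as in the paper's preceding lemma (via Lemma \ref{lem: regular cones} and Proposition \ref{prop: regions}(6)), and the three conclusions then follow by citing Proposition \ref{prop: finite comparable and MGS}, which is precisely how the paper deduces the result. Your added remarks on the product decomposition $\cM G(\Lambda[1],T)\times\cM G(T,\Lambda)$ and the dual use of Lemma \ref{lem: main lemma} match the paper's proof of that cited proposition as well.
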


\begin{rem}
We also note that there is at least one regular cluster tilting object, namely the direct sum of $P_i[1]$ with all $P_j$ for $i\neq j$ where $i$ is a source in the quiver of $\Lambda$.
\end{rem}

%%%%%%%%%%%%%%%%%%%%%%%%%%%%%%%%%%%%%%%%%%
%%%
%%%   The Geometry of Regular Objects
%%%
%%%%%%%%%%%%%%%%%%%%%%%%%%%%%%%%%%%%%%%%%%

\section{The Geometry of Regular Objects}

The set $\Reg(\Lambda)$ of regular cluster tilting objects of $\Lambda$ forms a poset where $T\le T'$ if there is a sequence of green mutations taking $T$ to $T'$. When this happens, there is a maximal green sequence containing both $T$ and $T'$. Using this as an intermediate step, we see that any maximal green sequence going through $T$ is polygonally equivalent to any maximal green sequence passing through $T'$. Thus it remains to show the following:

\begin{prop}\label{prop: Reg is connected}
The poset $\Reg(\Lambda)$ of regular cluster tilting objects of a tame hereditary algebra $\Lambda$ is connected.
\end{prop}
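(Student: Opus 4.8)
The plan is to prove connectivity geometrically, using the cones $R(T)$ to transport the problem onto the hyperplane $H(\eta)$. The first and decisive observation is that a cluster tilting object $T$ is regular if and only if the interior of $R(T)$ meets $H(\eta)$. Indeed, the functional $x\mapsto\brk{x,\eta}$ agrees up to a positive scalar with the defect: it is positive on all preprojective roots (Remark~\ref{rem: D(eta) is convex} records this for projectives, and the defect is constant on $\tau$-orbits), negative on all preinjective and shifted projective roots (for $P_i[1]$ one has $\brk{-\undim P_i,\eta}<0$), and zero on regular roots. Hence the generators $\undim T_i$ of $R(T)$ straddle $H(\eta)$ exactly when $T$ has a summand in $\cP$ and a summand in $\cJ$, i.e.\ exactly when $T$ is regular. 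Slicing then attaches to each regular $T$ the $(n-1)$-dimensional cone $\sigma_T:=R(T)\cap H(\eta)$ in $H(\eta)$.

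Since the cones $R(T)$ form a fan (their interiors are pairwise disjoint, as used throughout Proposition~\ref{prop: regions}), the slices $\sigma_T$ subdivide their union $\Omega:=\bigcup_{T\ \mathrm{regular}}\sigma_T$ with pairwise disjoint relative interiors. By Lemma~\ref{lem: regular cones} each $\sigma_T$ lies in $\cW_m\cap H(\eta)$, which equals $H(\eta)\setminus D(\eta)$ by Proposition~\ref{prop: regions}(2); so $\Omega\subseteq H(\eta)\setminus D(\eta)$. I would then establish the reverse inclusion from the region analysis of \cite{BHIT}: every generic point of $H(\eta)\setminus D(\eta)$ lies in a regular slice, so that $\Omega=H(\eta)\setminus D(\eta)$ away from lower-dimensional walls and the $\sigma_T$ form a polyhedral fan covering $H(\eta)\setminus D(\eta)$.

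With coverage in hand the argument is routine. If two slices $\sigma_T,\sigma_{T'}$ meet along a codimension-one wall of $\Omega$, then $R(T)$ and $R(T')$ share a facet of the ambient fan, so $T'=\mu_k T$ for a single $k$; this mutation is green in one direction, whence $T$ and $T'$ are comparable in $\Reg(\Lambda)$. Thus it suffices to connect any two regular $T,T'$ by a chain of such wall-crossings. As $D(\eta)$ is convex (Remark~\ref{rem: D(eta) is convex}) and $H(\eta)\cong\RR^{n-1}$, the region $H(\eta)\setminus D(\eta)$ is connected whenever $n\ge3$; choosing interior points of $\sigma_T$ and $\sigma_{T'}$ and joining them by a generic path in $H(\eta)\setminus D(\eta)$, the path crosses the walls of the fan one at a time and so produces the desired chain of mutation-adjacent regular objects. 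The rank-two (Kronecker) case, where $H(\eta)$ is a line, I would dispatch directly from the explicit list of regular clusters.

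The main obstacle is the coverage statement $\Omega=H(\eta)\setminus D(\eta)$. A priori the finitely many regular cones could leave holes in $H(\eta)\setminus D(\eta)$, and such holes might even disconnect $\Omega$; excluding them is precisely where the fine geometry of the fan near $H(\eta)$---the interplay of $\cV_m$, $\cW_m$ and $D(\eta)$ from \cite{BHIT}---must be pushed, and it is what makes the convexity of $D(\eta)$ in Remark~\ref{rem: D(eta) is convex} the structurally decisive input: once the only uncovered part of $H(\eta)$ is the convex set $D(\eta)$, its complement in the hyperplane is automatically connected in rank at least three.
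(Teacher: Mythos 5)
Your framework coincides with the paper's: slice the regular cones $R(T)$ by $H(\eta)$, compare the resulting union with $H(\eta)\setminus D(\eta)$, use convexity of $D(\eta)$ to get connectivity of that region, and transfer connectivity back to $\Reg(\Lambda)$ through the dual graph (this last step is the paper's Lemma \ref{lem: transversality}, and your wall-crossing/generic-path argument is a reasonable substitute for the paper's star-shapedness formulation). However, there is a genuine gap, and you have located it yourself: the coverage statement that every point of $H(\eta)\setminus D(\eta)$ lies in some regular cone. You defer this to ``the region analysis of \cite{BHIT},'' but it does not follow from the results you invoke. Proposition \ref{prop: regions} and Lemma \ref{lem: regular cones} only yield the inclusion you already have, namely that the regular slices lie in the closure of $H(\eta)\setminus D(\eta)$; the reverse inclusion is the actual mathematical content of the proposition, and it is proved in the paper by arguments that appear nowhere in your proposal.

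Concretely, the paper's proof of coverage has two steps. First (Lemma \ref{lem: point in H(eta) with neg coord are in a reg cluster}): if $x\in H(\eta)\setminus D(\eta)$ has a nonpositive coordinate, one adds nonnegative multiples of the vectors $\undim P_j$ (Lemma \ref{lem: point with negative coordinates}) to produce a vector $y$ supported on a proper subquiver $Q'$; since $\Lambda$ is tame, $Q'$ has finite representation type, so the Generic Decomposition Theorem writes $y$ as a nonnegative combination of pairwise ext-orthogonal indecomposables $M_i$, whence $x\in R(T)$ for the cluster $T=\bigoplus M_i\oplus\bigoplus P_j[1]$, and a defect computation with $\brk{\,\cdot\,,\eta}$ forces $T$ to be regular. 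Second, an arbitrary $x\in H(\eta)\setminus D(\eta)$ need not have a nonpositive coordinate: the paper uses $x\notin\cV_m$ to find $k\ge 0$ with $(\tau^{-k}x)_i<0$ (Lemma \ref{lem: claim 1}), applies the first step to $\tau^{-k}x$, and then must show that $x\in\tau^kR(T)=R(\tau^kT)$ with $\tau^kT$ still a regular cluster; this requires ruling out projective summands of $\tau^jT$ for $0\le j<k$, which is done using the minimality of $k$ (proof of Proposition \ref{prop: X cap H is star-shaped}). Without these two arguments your assertion that ``every generic point of $H(\eta)\setminus D(\eta)$ lies in a regular slice'' is exactly the proposition itself, assumed rather than proved. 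A minor further remark: your case split at rank $2$ is unnecessary, since $D(\eta)$ is a closed convex cone containing $\eta$ and not $-\eta$, so even when $H(\eta)$ is a line the complement $H(\eta)\setminus D(\eta)$ is an open ray, still connected (indeed star-shaped around $-\eta$, which is why the paper's formulation needs no case distinction).
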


\subsection{The regular cluster fan} We introduce the following subspaces of $\RR^n$.

\begin{defn}
The \emph{regular cluster fan} $X(\Lambda)$ is defined to be the union of the cones $R(T)$ where $T$ is a regular cluster, and $Y(\Lambda)$ is the intersection of $X(\Lambda)$ with the hyperplane $H(\eta)$.
\end{defn}

To prove Proposition \ref{prop: Reg is connected}, we will show that $Y(\Lambda)$ is contractible. This will produce a connected subgraph of $\Reg(\Lambda)$ containing all of its vertices implying that the entire graph $\Reg(\Lambda)$ is connected.

\begin{lem}\label{lem: transversality}
The space $Y(\Lambda)$ is closed and decomposes into a union of polytopes $P(T):=R(T)\cap H(\eta)$ where $T\in\Reg(\Lambda)$. Moreover, the dual graph to this decomposition is a subgraph of $\Reg(\Lambda)$ containing all of its vertices, i.e., two polytopes $P(T)$, $P(T')$ share a face if and only if $T$, $T'$ differ by a mutation.
\end{lem}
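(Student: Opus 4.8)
The plan is to establish the lemma in three parts: first that $Y(\Lambda)$ is closed, then that it decomposes into the polytopes $P(T)$, and finally the combinatorial identification of the dual graph with a subgraph of $\Reg(\Lambda)$. For the topological claims, I would note that by Lemma \ref{lem: regular cones} there are only finitely many regular cluster tilting objects $T$, so $X(\Lambda)$ is a finite union of the closed cones $R(T)$ and hence closed; intersecting with the closed hyperplane $H(\eta)$ shows $Y(\Lambda)$ is closed, and each $P(T)=R(T)\cap H(\eta)$ is the intersection of a closed polyhedral cone with a hyperplane, hence a polytope (it is bounded because, by Lemma \ref{lem: regular cones}, the interior of $R(T)$ genuinely crosses $H(\eta)$ rather than lying tangent to it, so the cone is not contained in a halfspace bounded by $H(\eta)$). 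This realizes $Y(\Lambda)$ as a union of polytopes indexed by $\Reg(\Lambda)$.

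The heart of the lemma is the word \emph{transversality} in its name: I must show the cones $R(T)$ meet $H(\eta)$ transversally, so that the decomposition $\{P(T)\}$ is genuinely a polytopal complex and its combinatorics match the mutation structure of $\Reg(\Lambda)$. First I would argue that for a regular cluster $T=T_1\oplus\cdots\oplus T_n$, the hyperplane $H(\eta)$ cannot contain a full facet of $R(T)$; indeed a facet of $R(T)$ is spanned by $n-1$ of the dimension vectors $\undim T_i$, and by the Lemma bounding regular components these cannot all lie on $H(\eta)$ (that would force the orthogonal complement to be spanned by $\eta$, contradicting that the $\undim T_i$ of a cluster are linearly independent and the almost-complete tilting object determines a real Schur root, not $\eta$). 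Thus $H(\eta)$ slices $R(T)$ in general position, and $P(T)$ is a polytope of codimension one in $R(T)$, i.e.\ of dimension $n-2$.

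For the dual graph statement I would prove both directions of the ``share a face if and only if differ by a mutation.'' If $T$ and $T'=\mu_k T$ differ by a mutation, they share the common facet $R(T/T_k)$ spanned by the shared summands $\undim T_i$ for $i\ne k$; intersecting this shared facet with $H(\eta)$ gives a common facet $P(T)\cap P(T')$ of the two polytopes, which is genuinely of codimension one in each by the transversality just established. Conversely, if $P(T)$ and $P(T')$ share a facet $F$, then $F$ is contained in both $R(T)$ and $R(T')$, and since $\dim F = n-3$ it spans, together with the hyperplane structure, a wall $R(T)\cap R(T')$ that is a common facet of the two cones; because regular clusters that share a facet in the cluster fan are exactly those related by a single mutation, we conclude $T'=\mu_k T$ for some $k$. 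The main obstacle will be the converse direction: I must rule out that two cones $R(T)$, $R(T')$ meet $H(\eta)$ in polytopes sharing a facet \emph{without} the cones themselves sharing a codimension-one wall — i.e.\ I must control how the intersection pattern in the hyperplane can differ from the intersection pattern of the full cones. This is exactly where the transversality of $H(\eta)$ to the fan, and the convexity properties of $D(\eta)$ from Remark \ref{rem: D(eta) is convex} together with part (7) of Proposition \ref{prop: regions} (the interiors of the $R(T)$ being disjoint from $D(\eta)$), must be invoked to guarantee that adjacency of the slices faithfully records adjacency of the cones.
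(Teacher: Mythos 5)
Your outline matches the paper's in broad strokes (closedness from finiteness, a transversality statement, then the fan property to identify shared faces with mutations), but the proof has a genuine gap at exactly the point you flag and then defer. Your transversality statement is too weak: you only show that no \emph{facet} of $R(T)$ (spanned by $n-1$ summands) lies in $H(\eta)$, via the bound of $n-2$ on regular components. What the argument actually needs is a statement about \emph{arbitrary} faces, in particular about the common face $G=R(T)\cap R(T')$, which may have any dimension. The paper's mechanism is different and stronger: if any face of $R(T)$ is contained in $H(\eta)$, then its spanning rays $\undim T_i$ are regular roots, hence lie in $D(\eta)$ by Remark \ref{rem: D(eta) is convex}, and by convexity of $D(\eta)$ the whole face lies in $D(\eta)$. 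Consequently every face \emph{meeting} $H(\eta)\setminus D(\eta)$ crosses $H(\eta)$ transversely, and then a shared codimension-one piece of $P(T)\cap P(T')$ meeting $H(\eta)\setminus D(\eta)$ forces $\dim G=n-1$, i.e.\ a common facet of the cones, i.e.\ a mutation. This is precisely how one rules out your ``main obstacle'' (slices sharing a wall without the cones sharing a wall): any such spurious wall would have to lie inside $H(\eta)$ itself, hence inside $D(\eta)$, and so it never appears in the decomposition of $Y(\Lambda)=\overline{H(\eta)\setminus D(\eta)}$. You name the right tools (convexity of $D(\eta)$, Proposition \ref{prop: regions}(7)) but never run this argument, so the heart of the lemma remains unproved in your plan.

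Two secondary problems. First, your geometric picture of $P(T)$ is wrong: $H(\eta)=\{x:\brk{x,\eta}=0\}$ is a \emph{linear} hyperplane through the origin, so $P(T)=R(T)\cap H(\eta)$ is an unbounded cone of dimension $n-1$ (full-dimensional in $H(\eta)$ by Lemma \ref{lem: regular cones}), not a bounded polytope of dimension $n-2$; your boundedness argument (``the cone crosses $H(\eta)$, hence the slice is bounded'') is simply false — e.g.\ the cone spanned by $(1,1)$ and $(1,-1)$ in $\RR^2$ crosses the line $y=0$ but meets it in an unbounded ray — and your subsequent dimension counts ($\dim F=n-3$, etc.) inherit this off-by-one error. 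Second, your forward direction also overreaches: knowing that the common facet $R(T/T_k)$ is not \emph{contained} in $H(\eta)$ does not make its intersection with $H(\eta)$ codimension one; if $T/T_k$ has summands only in $\cJ\cup\cR$ (or only in $\cP\cup\cR$), that facet touches $H(\eta)$ only along the span of its regular summands, which can be small. The paper avoids this by proving, and using, only the direction needed for connectivity of $\Reg(\Lambda)$: every adjacency in the dual graph of the decomposition (i.e.\ every shared wall meeting $H(\eta)\setminus D(\eta)$) comes from a mutation.
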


\begin{proof}
The fact that $Y(\Lambda)$ is closed follows from Lemma \ref{lem: regular cones} as $X(\Lambda)$ is a finite union of closed subspaces. 

To show that $Y(\Lambda)$ decomposes into polytopes, we observe that every face of every $R(T)$ which meets $H(\eta)\setminus D(\eta)$ crosses $H(\eta)$ transversely. This is because, in order for a linear simplex to not be transverse to a hyperplane, it must be contained in the hyperplane. But, if the vertices of a face of $R(T)$ lie in $H(\eta)$, they must be regular roots. By Remark \ref{rem: D(eta) is convex} all regular roots lie in $D(\eta)$. Since $D(\eta)$ is convex, this would imply the entire face lies in $D(\eta)$, contradicting the assumption. 

Consequently, the intersection of two polytopes $P(T)$, $P(T')$ is the intersection of $H(\eta)\setminus D(\eta)$ with the common codimension one face of $R(T)$ and $R(T')$ for two regular clusters $T$, $T'$. Thus, $T$ and $T'$ differ by a mutation. Therefore, the dual graph of this decomposition is a subgraph of $\Reg(\Lambda)$ which by Lemma \ref{lem: regular cones} contains all of its vertices.
\end{proof}

\begin{eg}
This example illustrates the space $Y(\Lambda)$. Consider the quiver $\footnotesize{\xymatrix@C=12pt{1 & 2\ar[l] & 3\ar[l] & 4\ar[l]\ar@/_/[lll]}}$ of type $\widetilde{A}_{3,1}$. Recall (cf. e.g., \cite{IOTW2}) any two adjacent cones $R(T)$, $R(T')$ intersect along a semi-invariant domain $D(\beta)$ for some root $\beta$. By the previous Lemma, the domains $D(\beta)$ cut $Y(\Lambda)$ into triangles and squares as illustrated in Figure \ref{figure 2} (the large outside region is a square containing the point at infinity); the dotted lines indicate the boundary of $Y(\Lambda)$.

The domains $D(\beta)$ intersect the boundary at (dimension vectors of) regular representations, as illustrated in the figure. Here $R$ is the regular representation with dimension vector $(1,1,0,1)$.

	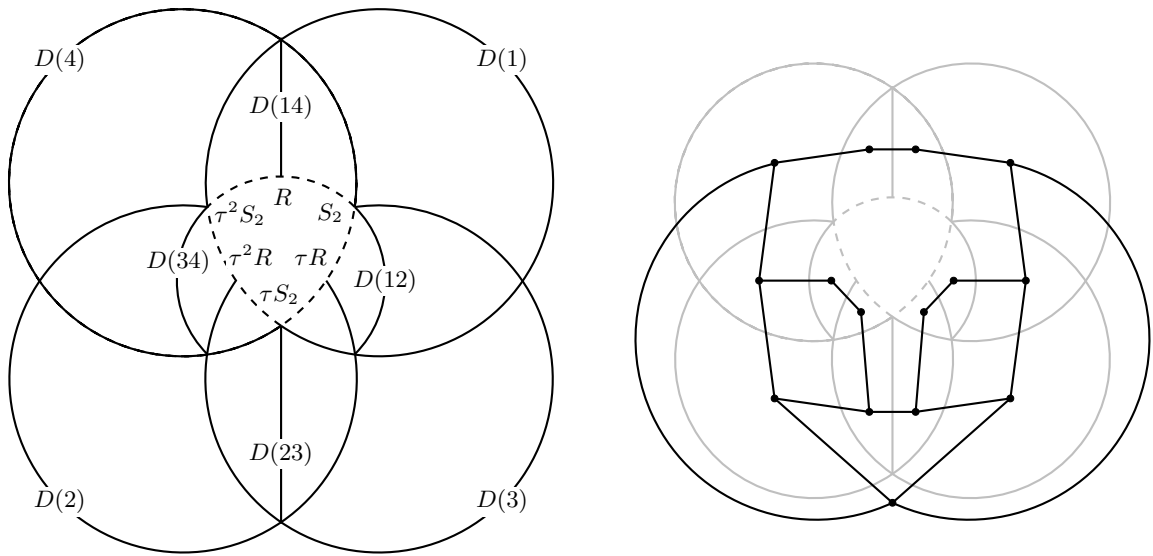
\begin{figure}[h]

		\begin{center}
	\newdimen\dd
	\dd=6em

	\begin{tikzpicture}[scale=0.8]

\begin{scope}[rotate=-45] %main pic    	 
        
    \begin{scope} % simple walls
    
    % wall 1      
    \draw[wall] (315:0.325\dd) arc (-79.5:233:1.25\dd) ; 
    \draw[nullwall] (315:0.325\dd) arc (360-79.5:233:1.25\dd) ;
     
	% wall 2
    \draw[wall] (315+90:0.325\dd) arc (79.5:-233:1.25\dd) ; 
%   \draw[nullwall] (315+90:0.325\dd) arc (360+79.5:-233:1.25\dd) ;

	% wall 3
    \draw[wall] (315-90:0.325\dd) arc (-79.5-90:233-90:1.25\dd) ; 
%   \draw[nullwall] (315-90:0.325\dd) arc (360-79.5-90:233-90:1.25\dd) ;         

	% wall 4
    \draw[wall] (315:0.325\dd) arc (79.5-90:-233-90:1.25\dd) ; 
    \draw[nullwall] (315:0.325\dd) arc (360+79.5-90:-233-90:1.25\dd) ;
    
	\end{scope}

	\begin{scope} % central arc
                
	\draw[wall] (0:0.75\dd) arc (0:90:0.75\dd) ;
	\draw[nullwall] (90:0.75\dd) arc (90:180:0.75\dd) ;
	\draw[wall] (180:0.75\dd) arc (180:270:0.75\dd) ;  
	
	\end{scope}
	
	\begin{scope} % short walls
	
	\draw[wall] (315:0.325\dd) -- (315:1.75\dd) ;
	\draw[wall] (315-180:0.75\dd) -- (315-180:1.75\dd) ;
	
	\end{scope}	
	
	\begin{scope} % domain labels
		\node[ann] at (0:2.25\dd) {$D(3)$} ;
		\node[ann] at (90:2.25\dd) {$D(1)$} ;
		\node[ann] at (180:2.25\dd) {$D(4)$} ;
		\node[ann] at (270:2.25\dd) {$D(2)$} ;
		
		\node[ann] at (315:1.25\dd) {$D(23)$} ;
		\node[ann] at (135:1.25\dd) {$D(14)$} ;
		
		\node[ann] at (45:0.75\dd) {$D(12)$} ;
		\node[ann] at (215:0.75\dd) {$D(34)$} ;
	\end{scope}
	
	\begin{scope}[shift=(135:0.3\dd),rotate=-17] % regular root labels
		\node[ann] at (30:0.25\dd) {$\tau R$} ;
		\node[ann] at (90:0.4\dd) {$S_2$} ;
		\node[ann] at (150:0.3\dd) {$R$} ;
		\node[ann] at (210:0.35\dd) {$\tau^2S_2$} ;
		\node[ann] at (270:0.25\dd) {$\tau^2 R$} ;
		\node[ann] at (330:0.4\dd) {$\tau S_2$} ;
	\end{scope}
	
\end{scope}

\begin{scope}[xshift=4.4\dd,rotate=-45,scale=0.8] % copy for graph
     
    \begin{scope} % simple walls
    
    % wall 1      
    \draw[faintwall] (315:0.325\dd) arc (-79.5:233:1.25\dd) ; 
    \draw[faintnullwall] (315:0.325\dd) arc (360-79.5:233:1.25\dd) ;
     
	% wall 2
    \draw[faintwall] (315+90:0.325\dd) arc (79.5:-233:1.25\dd) ; 
%   \draw[nullwall] (315+90:0.325\dd) arc (360+79.5:-233:1.25\dd) ;

	% wall 3
    \draw[faintwall] (315-90:0.325\dd) arc (-79.5-90:233-90:1.25\dd) ; 
%   \draw[nullwall] (315-90:0.325\dd) arc (360-79.5-90:233-90:1.25\dd) ;         

	% wall 4
    \draw[faintwall] (315:0.325\dd) arc (79.5-90:-233-90:1.25\dd) ; 
    \draw[faintnullwall] (315:0.325\dd) arc (360+79.5-90:-233-90:1.25\dd) ;
    
	\end{scope}

	\begin{scope} % central arc
                
	\draw[faintwall] (0:0.75\dd) arc (0:90:0.75\dd) ;
	\draw[faintnullwall] (90:0.75\dd) arc (90:180:0.75\dd) ;
	\draw[faintwall] (180:0.75\dd) arc (180:270:0.75\dd) ;  
	
	\end{scope}
	
	\begin{scope} % short walls
	
	\draw[faintwall] (315:0.325\dd) -- (315:1.75\dd) ;
	\draw[faintwall] (315-180:0.75\dd) -- (315-180:1.75\dd) ;
	
	\end{scope}	
	
	\begin{scope} % dual graph vertices
    
    \draw[fill] (0:1.5\dd) circle[radius=2pt];
    \draw[fill] (90:1.5\dd) circle[radius=2pt];
    \draw[fill] (180:1.5\dd) circle[radius=2pt];
    \draw[fill] (270:1.5\dd) circle[radius=2pt];
    
    \draw[fill] (0:0.4\dd) circle[radius=2pt];
    \draw[fill] (270:0.4\dd) circle[radius=2pt];
    \draw[fill] (45:0.55\dd) circle[radius=2pt];
    \draw[fill] (225:0.55\dd) circle[radius=2pt];    
    
    \draw[fill] (45:1.2\dd) circle[radius=2pt];
    \draw[fill] (225:1.2\dd) circle[radius=2pt];
    
    \draw[fill] (125:1.2\dd) circle[radius=2pt];
    \draw[fill] (145:1.2\dd) circle[radius=2pt];  
    
    \draw[fill] (125:-1.2\dd) circle[radius=2pt];
    \draw[fill] (145:-1.2\dd) circle[radius=2pt];  
    
    \draw[fill] (315:2\dd) circle[radius=2pt];  

    \end{scope}
    
    \begin{scope} % dual graph edges
    
    \draw[wall] (0:1.5\dd) -- (45:1.2\dd) -- (90:1.5\dd) -- (125:1.2\dd) -- (145:1.2\dd) -- (180:1.5\dd) -- (225:1.2\dd) -- (270:1.5\dd) -- (125:-1.2\dd) -- (145:-1.2\dd) -- (0:1.5\dd) ;
    
    \draw[wall] (145:-1.2\dd) -- (0:0.4\dd) -- (45:0.55\dd) -- (45:1.2\dd) ; 
    
    \draw[wall] (125:-1.2\dd) -- (270:0.4\dd) -- (225:0.55\dd) -- (225:1.2\dd) ; 
    
    \draw[wall] (270:1.5\dd) -- (315:2\dd) -- (0:1.5\dd) ;
    \draw[wall] (-45:2\dd) arc (-70:122:1.625\dd) ;
    \draw[wall] (-45:2\dd) arc (-90+70:-90-122:1.625\dd) ;
 
    \end{scope}

\end{scope}

    \end{tikzpicture}
    
	\end{center}
	\caption{The space $Y(\Lambda)$ and its dual graph in type $\widetilde{A}_{3,1}$}
	\label{figure 2}

	\end{figure}

\end{eg}

Recall that a subspace $X\subseteq\RR^n$ is said to be \emph{star-shaped} around $x_0\in X$ if for every point $x\in X$ the line $\{tx_0+(1-t)x$: $0\le t\le 1\}$ is contained in $X$. Clearly, a star-shaped space is contractible. We recall without proof the following elementary properties of star-shaped regions.

\begin{lem}\label{lem: star shaped}
	
	\begin{enumerate}
	\item If $X$ is star-shaped about $x_0$, then so is its closure $\overline{X}$.
	\item If $C$ is a closed convex cone containing a point $x_0$, then the compliment $\RR^n\setminus C$ is star-shaped around $x_0$.
	\end{enumerate}
	
\end{lem}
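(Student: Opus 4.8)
The plan is to verify both items by elementary point-set arguments; the only genuine subtlety is the correct choice of the star center in part (2).

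For part (1) I would argue by approximation. Let $x\in\overline{X}$ and choose a sequence $x_n\in X$ with $x_n\to x$. Since $X$ is star-shaped about $x_0$, for every $t\in[0,1]$ the point $tx_0+(1-t)x_n$ lies in $X$, and as $n\to\infty$ it converges to $tx_0+(1-t)x$. Hence the segment from $x_0$ to $x$ is a pointwise limit of segments contained in $X$, so it lies in $\overline{X}$; together with $x_0\in X\subseteq\overline{X}$ this shows $\overline{X}$ is star-shaped about $x_0$. Nothing here is delicate.

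For part (2) the center of the star must itself lie in the complement, so the natural candidate is $-x_0$ rather than $x_0$ (which is in $C$); the conclusion one proves, and the one actually used, is that $\RR^n\setminus C$ is star-shaped about $-x_0$, under the hypothesis $-x_0\notin C$. In the intended application $C=D(\eta)$, $x_0=\eta$ this hypothesis holds because $-\eta\in H(\eta)\setminus D(\eta)$ by Remark \ref{rem: D(eta) is convex}. I would then show that for any $x\in\RR^n\setminus C$ the segment from $-x_0$ to $x$ avoids $C$. Suppose instead some $z=(1-t)x-tx_0\in C$ with $t\in[0,1]$. Because $C$ is a convex cone containing $x_0$, we may add the nonnegative combination $tx_0$ to get $z+tx_0=(1-t)x\in C$. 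If $t<1$, then scaling by $1/(1-t)>0$ and using that $C$ is a cone yields $x\in C$, a contradiction; if $t=1$, then $z=-x_0\in C$, contradicting $-x_0\notin C$. Thus the segment lies entirely in the complement, which is therefore star-shaped about $-x_0$.

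The only obstacle is this sign bookkeeping: as phrased, ``star-shaped around $x_0$'' cannot hold when $x_0\in C$, since then $x_0\notin\RR^n\setminus C$, and the argument above forces the center to be $-x_0$. This is exactly what is needed downstream, where one wants the closure of $Y(\Lambda)\subseteq H(\eta)\setminus D(\eta)$ to be star-shaped about $-\eta$; combining the two items (the complement of the convex cone $D(\eta)$ is star-shaped about $-\eta$, and taking closures preserves this) gives the contractibility feeding into Proposition \ref{prop: Reg is connected}. Once the center is chosen correctly, each statement reduces to the single convex-cone manipulation above.
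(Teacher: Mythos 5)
Your proof is correct; note, though, that the paper states this lemma explicitly \emph{without} proof (``We recall without proof the following elementary properties\dots''), so there is no internal argument to compare against—what you supply is the expected elementary justification. Part (1) is the standard approximation argument and is fine. The more valuable observation is your diagnosis of part (2): as literally stated it cannot hold, since $x_0\in C$ means $x_0\notin\RR^n\setminus C$, and by the paper's own definition a star center must belong to the set. The statement the paper actually needs, and the one you prove, is that if $C$ is a closed convex cone with $x_0\in C$ and $-x_0\notin C$, then $\RR^n\setminus C$ is star-shaped about $-x_0$. This is precisely how the lemma is invoked immediately afterward: $C=D(\eta)$, $x_0=\eta$, and $-\eta\in H(\eta)\setminus D(\eta)$ by Remark \ref{rem: D(eta) is convex}, giving that $H(\eta)\setminus D(\eta)$ is star-shaped about $-\eta$ (passing from $\RR^n$ to the subspace $H(\eta)$ is harmless, since $D(\eta)\subseteq H(\eta)$ and segments between points of $H(\eta)$ stay in $H(\eta)$). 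Your cone manipulation—if $z=(1-t)x-tx_0\in C$, add $tx_0\in C$ to get $(1-t)x\in C$, then rescale when $t<1$ and fall back on $-x_0\notin C$ when $t=1$—is exactly the right argument, and it feeds correctly into Proposition \ref{prop: X cap H is star-shaped}.
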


In particular, (2) implies that $H(\eta)\setminus D(\eta)$ is star-shaped around $-\eta$, as $D(\eta)$ is convex and contains $\eta$. As star-shaped regions are contractible, the proof of Proposition \ref{prop: Reg is connected} reduces to the following.

\begin{prop}\label{prop: X cap H is star-shaped}
The space $Y(\Lambda)$ is the closure of the star-shaped set $H(\eta)\setminus D(\eta)$, and hence is star-shaped.
\end{prop}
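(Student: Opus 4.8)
The plan is to prove the sharper equality $Y(\Lambda)=\overline{H(\eta)\setminus D(\eta)}$; the words ``hence star-shaped'' are then immediate, since $H(\eta)\setminus D(\eta)$ is already known to be star-shaped about $-\eta$ (Remark~\ref{rem: D(eta) is convex} with Lemma~\ref{lem: star shaped}(2)) and the closure of a star-shaped set is star-shaped by Lemma~\ref{lem: star shaped}(1). I would prove the set equality by two inclusions. The inclusion $Y(\Lambda)\subseteq\overline{H(\eta)\setminus D(\eta)}$ is the easy one: for a regular cluster $T$, Lemma~\ref{lem: regular cones} gives that $\operatorname{int}R(T)$ lies in $\cW_m$, so by Proposition~\ref{prop: regions}(2) the set $\operatorname{int}R(T)\cap H(\eta)$ is contained in $\cW_m\cap H(\eta)=H(\eta)\setminus D(\eta)$. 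Since $H(\eta)$ meets the interior of the full-dimensional convex cone $R(T)$, the slice $P(T)=R(T)\cap H(\eta)$ is the closure of $\operatorname{int}R(T)\cap H(\eta)$ (a standard convexity fact), whence $P(T)\subseteq\overline{H(\eta)\setminus D(\eta)}$; taking the finite union over regular $T$ gives the inclusion.

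For the reverse inclusion it suffices, as $Y(\Lambda)$ is closed (Lemma~\ref{lem: transversality}), to show $U:=H(\eta)\setminus D(\eta)\subseteq Y(\Lambda)$, and for this to cover a dense subset of the connected set $U$. The engine is the observation that a cluster $T$ is regular precisely when $\operatorname{int}R(T)$ meets $H(\eta)$: summands in $\cP$ satisfy $\brk{\,\cdot\,,\eta}>0$ while summands in $\cJ$ (preinjectives and the shifted projectives $P_i[1]$, using $\brk{\undim P_i[1],\eta}=-\brk{\undim P_i,\eta}<0$ by Remark~\ref{rem: D(eta) is convex}) satisfy $\brk{\,\cdot\,,\eta}<0$, so $R(T)$ has points on both sides of $H(\eta)$ exactly when $T$ has summands in both $\cP$ and $\cJ$. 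I would then argue that, away from a codimension-$\ge2$ set, the topological boundary of $Y(\Lambda)$ taken within $H(\eta)$ is confined to the boundary of $D(\eta)$ in $H(\eta)$.

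The heart of the matter is a local covering statement at codimension-one faces. Let $p$ lie in the relative interior of a facet of a regular polytope $P(T)$ with $p\in U$. This facet is $R(T/T_k)\cap H(\eta)$ for some mutation direction $k$, and by the transversality established in the proof of Lemma~\ref{lem: transversality} the cone face $R(T/T_k)$ crosses $H(\eta)$ transversely at $p$. Since $R(T)$ and its mutation $R(\mu_kT)$ lie on opposite sides of the wall $R(T/T_k)$, transversality forces $\operatorname{int}R(\mu_kT)$ to meet $H(\eta)$ near $p$; by the characterization above $\mu_kT$ is again regular, so $p\in P(\mu_kT)$ and the two half-neighborhoods $P(T),P(\mu_kT)$ fill an $H(\eta)$-neighborhood of $p$. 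Thus no facet point of $U$ is a boundary point of $Y(\Lambda)$, so the relative boundary of $Y(\Lambda)$ in $H(\eta)$ meets $U$ only in the codimension-$\ge2$ skeleton $\Sigma$ of the (finite) polytopal decomposition. Removing the closed set $\Sigma$ from the star-shaped, hence connected, set $U$ leaves a connected set $U\setminus\Sigma$ partitioned into the two open pieces $\operatorname{int}Y(\Lambda)\cap(U\setminus\Sigma)$ and $(U\setminus\Sigma)\setminus Y(\Lambda)$; connectedness together with non-emptiness of the first (a regular cluster exists by the remark following Proposition~\ref{prop: all MGS though T are equivalent}) forces the second to be empty. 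Hence $U\setminus\Sigma\subseteq Y(\Lambda)$, and since $\Sigma$ is nowhere dense, taking closures gives $\overline{H(\eta)\setminus D(\eta)}\subseteq Y(\Lambda)$.

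The step I expect to be the main obstacle is exactly this local covering claim: the clean geometric input that adjacent dimension-vector cones $R(T),R(\mu_kT)$ share the wall $R(T/T_k)$ and sit on opposite sides of it (the mutation-adjacency of the cones underlying the semi-invariant picture of \cite{IOTW2}), together with the bookkeeping that keeps the ``unfilled'' part of the boundary of $Y(\Lambda)$ inside the codimension-$\ge2$ skeleton so that the connectedness argument applies. Everything else reduces to the transversality already isolated in Lemma~\ref{lem: transversality} and the region identities of Proposition~\ref{prop: regions}.
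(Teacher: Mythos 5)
Your proposal is correct, and for the hard inclusion it takes a genuinely different route from the paper. Both arguments handle the easy inclusion $Y(\Lambda)\subseteq\overline{H(\eta)\setminus D(\eta)}$ the same way, via Lemma \ref{lem: regular cones} and Proposition \ref{prop: regions}. For the reverse inclusion the paper is pointwise and constructive: given $x\in H(\eta)\setminus D(\eta)$, Lemma \ref{lem: claim 1} produces a minimal $k\ge0$ with $\tau^{-k}x$ having a nonpositive coordinate; Lemma \ref{lem: point in H(eta) with neg coord are in a reg cluster} (which rests on Lemma \ref{lem: point with negative coordinates} and the generic decomposition theorem applied to a proper, hence representation-finite, subquiver) exhibits a regular cluster $T$ with $\tau^{-k}x\in R(T)$; and one then transports back to $x\in R(\tau^kT)$, using $\tau R(T)=R(\tau T)$ together with a minimality argument ruling out projective summands of the intermediate $\tau^jT$. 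You instead argue local-to-global: across any facet of $P(T)$ meeting $H(\eta)\setminus D(\eta)$ the adjacent cluster $\mu_kT$ is again regular (your characterization of regularity as ``the interior of $R(T)$ meets $H(\eta)$'' is valid, since summands in $\cP$, $\cR$, $\cJ$ pair strictly positively, trivially, and strictly negatively with $\eta$), so the relative boundary of $Y(\Lambda)$ inside $H(\eta)\setminus D(\eta)$ is confined to a codimension-two skeleton, and a connectedness/general-position argument, seeded by the existence of one regular cluster, finishes. What your approach buys: it makes the generic decomposition theorem and all of the $\tau$-orbit bookkeeping (Lemmas \ref{lem: point with negative coordinates}, \ref{lem: point in H(eta) with neg coord are in a reg cluster}, \ref{lem: claim 1}) unnecessary. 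What it costs: it leans on the fact that the cones $R(T)$ form a simplicial fan, i.e.\ each wall $R(T/T_k)$ is shared by exactly the two completions $R(T)$ and $R(\mu_kT)$, which lie on opposite sides of it. That is a genuine extra input beyond Proposition \ref{prop: regions}, but it is known for hereditary algebras and is precisely the semi-invariant picture of \cite{IOTW2} that the paper itself invokes (e.g.\ in the $\widetilde{A}_{3,1}$ example), so the citation is legitimate; for a complete write-up you should state it, and the elementary fact that deleting a closed codimension-$\ge2$ polyhedral set from a connected open subset of $H(\eta)$ preserves connectedness, as explicit lemmas.
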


Proposition \ref{prop: Reg is connected} is an immediate corollary of the above proposition. Indeed, Proposition \ref{prop: X cap H is star-shaped} implies the space $Y(\Lambda)$ has only one component, hence it's dual graph $\Gamma$ is connected. By Lemma \ref{lem: transversality}, $\Gamma$ is a subgraph of the (underlying graph of) $\Reg(\Lambda)$, hence $\Reg(\Lambda)$ is also connected.

\subsection{Some lemmas}

For the proof of Proposition \ref{prop: X cap H is star-shaped} we require a few technical lemmas.

\begin{lem}\label{lem: point with negative coordinates}
For any $x\in\RR^n$ there exists $J\subseteq \{j\,:\,x_j<0\}$ and $a_j>0$ for all $j\in J$ so that

	\begin{enumerate}
	\item $y:=x+\sum_{j\in J} a_j\undim P_j$ has nonnegative coordinates.
	\item $y_j=0$ for all $j\in J$.
	\end{enumerate}
	
\end{lem}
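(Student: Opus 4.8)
The plan is to exploit the triangular structure of the dimension vectors $\undim P_j$ coming from acyclicity of the quiver of $\Lambda$, and to construct $J$ together with the coefficients $a_j$ by a greedy sweep over the vertices.

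First I would record the relevant properties of the projective dimension vectors. Since $\Lambda$ is hereditary its quiver has no oriented cycles, and the composition factors of $P_j$ are exactly the simples $S_k$ for which there is a directed path $j\to k$. Consequently each $\undim P_j$ has nonnegative entries, its $j$-th entry equals $1$ (as $S_j$ occurs only in the top of $P_j$), and $(\undim P_j)_k\neq 0$ with $k\neq j$ forces the existence of a path $j\to k$. Fix a total order $\prec$ on the vertices refining this path relation, so that every arrow $a\to b$ satisfies $b\prec a$; then $(\undim P_j)_k\neq 0$ implies $k\preceq j$, i.e. the matrix with columns $\undim P_j$ is lower unitriangular with respect to $\prec$, and in particular each $\undim P_j$ is supported on $\{k:k\preceq j\}$.

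Next I would run the following sweep, processing the vertices in \emph{decreasing} $\prec$-order. Maintain a running vector $z$, initially $z:=x$. When a vertex $j$ is reached, inspect the current value $z_j$: if $z_j\ge 0$, leave $j$ out of $J$ and do nothing; if $z_j<0$, put $j$ into $J$, set $a_j:=-z_j>0$ (using $(\undim P_j)_j=1$), and replace $z$ by $z+a_j\undim P_j$. Because $\undim P_j$ is supported on $\{k:k\preceq j\}$, adding it never alters a coordinate indexed by a $\prec$-larger vertex, so every coordinate is finalized at the moment its own vertex is processed and is left untouched by all later steps. The finalized value is $z_j\ge 0$ when $j\notin J$ and exactly $0$ when $j\in J$. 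Hence the final vector $y=x+\sum_{j\in J}a_j\undim P_j$ satisfies both conclusions $y\ge 0$ and $y_j=0$ for $j\in J$.

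It remains to verify the membership constraint $J\subseteq\{j:x_j<0\}$, which is the one point needing a small argument, and which dictates the choice of sweep direction. At the moment vertex $j$ is processed, the running vector differs from $x$ only by the terms $a_{j'}\undim P_{j'}$ with $j'\succ j$ that were already added, and each such term contributes $a_{j'}(\undim P_{j'})_j\ge 0$ to the $j$-th coordinate; therefore $z_j\ge x_j$. Consequently $z_j<0$ forces $x_j<0$, so every index placed in $J$ indeed satisfies $x_j<0$. The main obstacle is thus not any hard estimate but getting the bookkeeping right: one must sweep so that projectives added later cannot disturb already-fixed coordinates, while simultaneously ensuring that the contributions to a coordinate from previously added projectives are nonnegative, and the sign structure of $\undim P_j$ guarantees that both requirements are met by the single choice of decreasing $\prec$-order.
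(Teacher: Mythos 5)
Your proof is correct and is essentially the paper's argument: the paper performs the same construction as an induction on the number of negative coordinates, at each step adding $|x_k|\undim P_k$ for the largest index $k$ with $x_k<0$ under exactly your ordering (projectives supported on smaller vertices), which is just your decreasing greedy sweep unrolled.
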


\begin{proof}
The proof is by induction on the number of negative coordinates of $x$. If this number is zero, then $J=\emptyset$ and $y=x$ satisfies the required conditions. Otherwise, choose the largest index $k$ so that $x_k<0$ where the vertices are ordered so that every projective $P_i$ has support at vertices $j\le i$. Let $a_k=|x_k|$ and let $x'=x+a_k\undim P_k$. Then

	\begin{enumerate}
	\item $x_k'=0$,
	\item $x'$ has fewer negative coordinates than $x$,
	\item if $x'_i<0$ then $x_i<0$.
	\end{enumerate}

By induction, there exists $J'\subseteq\{j\,:\,x_j'<0\}\subseteq \{j\,:\,x_j<0\}$ and $a_j$, $j\in J'$, so that
\[
	y:=x'+\sum_{j\in J'}a_j\undim P_j=x+a_k\undim P_k+\sum_{j\in J'}a_j\undim P_j
\]
has the desired properties.
\end{proof}

\begin{lem}\label{lem: point in H(eta) with neg coord are in a reg cluster}
Suppose $x\in H(\eta)\setminus D(\eta)$ has at least one nonpositive coordinate. Then $x\in R(T)$ for some regular cluster tilting object $T$.
\end{lem}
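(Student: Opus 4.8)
The plan is to separate the statement into two independent facts: first, that \emph{any} cluster tilting object whose cone contains $x$ is automatically regular, and second, that $x$ genuinely lies in some cluster cone. The hypothesis $x\notin D(\eta)$ does all the work in the first fact, and the nonpositive coordinate, via Lemma~\ref{lem: point with negative coordinates}, does all the work in the second.

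For the first fact, suppose $x\in R(T)$ for a cluster tilting object $T=T_1\oplus\cdots\oplus T_n$, so $x=\sum a_i\undim T_i$ with all $a_i\ge 0$. I would sort the summands by the sign of the defect $\brk{\undim T_i,\eta}$: this is positive for $T_i\in\cP$, zero for regular $T_i$ (regular roots lie on $H(\eta)$ by Remark~\ref{rem: D(eta) is convex}), and negative for $T_i\in\cJ$ (for a shifted projective $P_i[1]$ this is $-\brk{\undim P_i,\eta}<0$, and preinjectives have negative defect). Since $x\in H(\eta)$ we get $\sum a_i\brk{\undim T_i,\eta}=\brk{x,\eta}=0$. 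If $T$ had no summand in $\cJ$, every term of this sum would be nonnegative, forcing $a_i=0$ on every preprojective summand; then $x$ would be a nonnegative combination of regular roots, hence an element of the convex cone $D(\eta)$, contradicting $x\notin D(\eta)$. Symmetrically, having no summand in $\cP$ makes all defects nonpositive and gives the same contradiction. Thus $T$ has summands from both $\cP$ and $\cJ$, i.e.\ $T$ is regular, and it only remains to place $x$ in \emph{some} cluster cone.

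For the second fact I would apply Lemma~\ref{lem: point with negative coordinates} to write $x=y+\sum_{j\in J}a_j\undim P_j[1]$ with $y\ge 0$ coordinatewise, $y_j=0$ for $j\in J$, and $a_j>0$. The main case is when $x$ has a strictly negative coordinate, so that $J\neq\emptyset$; the boundary case $x\ge 0$ (where the nonpositive coordinate is actually a zero) then follows because the union of the finitely many regular cones $R(T)$ is closed (Lemma~\ref{lem: regular cones}) and points with a strictly negative coordinate are dense in the nonpositive locus of $H(\eta)\setminus D(\eta)$. Assuming $J\neq\emptyset$, the shifted projectives $P_j[1]$, $j\in J$, are pairwise compatible, and a module is compatible with all of them precisely when it is supported on $J^c=\{1,\dots,n\}\setminus J$, i.e.\ is a module over the path algebra $\Lambda'$ of the full subquiver on $J^c$; this subcategory is extension closed, so compatibility in $\Lambda$ and in $\Lambda'$ coincide. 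The key structural point is that deleting a vertex from the Euclidean quiver of a tame algebra leaves a disjoint union of Dynkin quivers, so $\Lambda'$ is representation finite and has \emph{no} null root. Hence the dimension-vector cones $R'(T')$ of the tilting modules $T'$ of $\Lambda'$ cover the whole positive orthant of $\RR^{J^c}$, and I can choose $T'$ with $y\in R'(T')$. Then $T=\bigoplus_{j\in J}P_j[1]\oplus T'$ is a cluster tilting object of $\Lambda$, $x=\sum_{j\in J}a_j\undim P_j[1]+y\in R(T)$, and by the first fact $T$ is regular.

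The identifications involved — matching cluster tilting objects of $\Lambda$ containing the $P_j[1]$ with those of $\Lambda'$, and matching $R(T)$ with $R'(T')$ under $\RR^{J^c}\hookrightarrow\RR^n$ — are routine. The one genuinely substantive input, and the step I expect to be the main obstacle, is the covering of the positive orthant of the representation finite algebra $\Lambda'$ by the cones of its tilting modules: this is where one must know that no imaginary (null-root) direction obstructs writing $y$ as a nonnegative combination of a compatible family of exceptional modules. It is precisely to secure this that the nonpositive-coordinate hypothesis is needed, since it forces $J\neq\emptyset$ and thereby reduces the problem to a Dynkin subalgebra that has no null root to cause trouble.
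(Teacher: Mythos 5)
Your proposal is correct, and its skeleton is the same as the paper's: strip off shifted projectives at the negative coordinates via Lemma \ref{lem: point with negative coordinates}, note that the remainder $y$ is supported on a proper (hence representation-finite) subquiver, decompose $y$ there into pairwise compatible exceptional modules, and get regularity by pairing with $\eta$ and using convexity of $D(\eta)$ (Remark \ref{rem: D(eta) is convex}). The differences are organizational, and each has a cost or a benefit. Your Fact 1 --- that \emph{any} cluster cone containing $x$ is automatically regular --- is a cleaner, more modular packaging of the paper's regularity step; the paper proves regularity only for the specific $T$ it constructs, by the same defect-sign computation, so here your version is arguably an improvement. On the other hand, the paper is more economical on your two remaining debts. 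First, the ``main obstacle'' you flag (tilting cones of a representation-finite algebra cover the positive orthant) is exactly what the paper discharges by citing the Generic Decomposition Theorem (\cite{DW}, \cite{S}) applied to $y$ over the subquiver: that theorem directly produces pairwise compatible exceptional modules $M_i$ with $y=\sum b_i\undim M_i$, with no need for Bongartz completion or completeness of the cluster fan (and completing the resulting rigid object to a cluster tilting object only enlarges the cone, so nothing is lost). Second, the paper avoids your density-plus-closedness treatment of the boundary case $x\ge 0$: when the nonpositive coordinate is a zero, one simply takes $J=\{j : x_j=0\}$ and $a_j=0$, so $y=x$ is still supported on a proper subquiver and the identical construction applies; this keeps the lemma free of the extra dependency on the finiteness statement of Lemma \ref{lem: regular cones}, and spares you the density claim --- which is true, but itself requires an argument (one needs a direction $v\in H(\eta)$ with $v_k<0$, which exists only because sincerity of $\eta$ rules out $H(\eta)$ being the coordinate hyperplane $\{x_k=0\}$). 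So: same route, with your regularity step slightly more general than the paper's, and the paper's construction slightly more self-contained than yours.
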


\begin{proof}
When $x$ has at least one negative coordinate, we have by Lemma \ref{lem: point with negative coordinates} that $x=y-\sum_{j\in J} a_j\undim P_j$ where $y_j=0$ for all $j\in J$ and $y_i\ge0$ for all $i$. When all coordinates of $x$ are $\ge0$, let $y=x$ and let $J$ be the set of all $j$ so that $y_j=x_j=0$ and let $a_j=0$ for all $j$.

Since $J$ is nonempty in both cases, $y$ has support in a proper subquiver $Q'$ of $Q$, the quiver of $\Lambda$, with vertex set $Q'_0=Q_0\setminus J$. Since $\Lambda$ is tame, $Q'$ has finite representation type. Therefore, by the Generic Decomposition Theorem (see \cite{DW}, \cite{S}), $y$ can be written as $y=\sum_{i\in Q_0'}b_i\undim M_i$ where $b_i\ge0$ and $M_i$ are indecomposable representations of the modulated quiver $Q'$ which do not extend each other. Then, $T=\bigoplus M_i\oplus \bigoplus P_j[1]$ is a cluster tilting object for $\Lambda$ and $x=\sum b_i \undim M_i+\sum a_j\undim P_j[1]\in R(T)$.

To show $T$ is regular, suppose that it is not. Then all $M_i$ are preinjective or regular. So,
	\[
	\brk{x,\eta}=\sum b_i\brk{\undim M_i,\eta}-\sum a_j\brk{\undim P_j,\eta}\le -\sum a_j\brk{\undim P_j,\eta}
	\]
which is negative, contradicting the assumption that $x\in H(\eta)$ (as $\brk{x,\eta}=0$) in the case when $a_j>0$. Thus, $\brk{x,\eta}=0$ only in the case when $a_j=0$ for all $j\in J$ and, furthermore, each $M_i$ must be regular since $\brk{\undim M_i,\eta}<0$ for preinjective $M_i$. But then $x$ is a positive linear combination of regular roots making it an element of the convex set $D(\eta)$ contrary to assumption by Remark \ref{rem: D(eta) is convex}. So, $T$ is regular in all cases.
\end{proof}

\begin{lem}\label{lem: claim 1}
If $x\in H(\eta)\setminus D(\eta)$, there is an integer $k\ge 0$ so that $\tau^{-k}x$ has a negative coordinate.
\end{lem}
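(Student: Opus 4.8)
The plan is to translate the sign conditions on the coordinates of $\tau^{-k}x$ into pairing conditions against preprojective and preinjective roots, and then to exploit the fact that $\tau$ acts with finite order on $H(\eta)$, so that the forward and backward $\tau$-orbits of $x$ coincide.

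First I would record three identities. For every $y\in\RR^n$ one has $\brk{\undim P_i,y}=f_iy_i$ (because $P_i$ is projective) and $\brk{y,\undim I_i}=f_iy_i$ (because $I_i$ is injective). Next, the Coxeter transformation $\tau=-E^{-1}E^t$ is an isometry of the Euler form, $\brk{\tau x,\tau y}=\brk{x,y}$, and it satisfies $\tau\undim P_i=-\undim I_i$ as dimension vectors. Combining these, for a preprojective root $\alpha=\tau^{-j}\undim P_i$ with $j\ge0$,
\[
	\brk{x,\alpha}=\brk{\tau^j x,\undim P_i}=-\brk{\tau^j x,\tau^{-1}\undim I_i}=-\brk{\tau^{j+1}x,\undim I_i}=-f_i(\tau^{j+1}x)_i.
\]
Hence the condition $x\notin D(\eta)$, i.e. $\brk{x,\alpha}>0$ for some preprojective $\alpha$, is exactly the statement that some \emph{forward} translate $\tau^{s}x$ with $s\ge1$ has a negative coordinate.

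It then remains to pass from a forward translate to a backward one, and this is where tameness enters. By Dlab--Ringel the power $\tau^{m}$ acts on all of $\RR^n$ as $x\mapsto x+\delta(x)\eta$ for the linear functional $\delta$ (the defect), since the stated identities on preprojective and preinjective roots hold on a spanning set. I would then show $\delta$ is a positive multiple of $x\mapsto\brk{x,\eta}$: pairing the relation $\tau^{m}x-x=\delta(x)\eta$ against $\undim P_i$ and using $\brk{\undim P_i,\eta}=f_i\eta_i>0$ together with $\brk{\eta,\eta}=0$ isolates $\delta(x)$ as a positive constant times $\brk{x,\eta}$. Consequently $\delta$ vanishes precisely on $H(\eta)$, so $\tau^{\pm m}$ fixes $H(\eta)$ pointwise and the $\tau$-orbit of any $x\in H(\eta)$ is the single finite cycle $\{x,\tau x,\dots,\tau^{m-1}x\}$. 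On this cycle the set of forward translates $\{\tau^{s}x:s\ge1\}$ coincides with the set of backward translates $\{\tau^{-k}x:k\ge0\}$. Combining with the previous paragraph, $x\notin D(\eta)$ forces some $\tau^{-k}x$ with $k\ge0$ to have a negative coordinate, which is the desired conclusion.

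The main obstacle is the periodicity step: establishing that the defect functional vanishes exactly on $H(\eta)$, so that $\tau$ has finite order there and forward orbits equal backward orbits. Once this is in hand, the rest is bookkeeping with the identities $\tau\undim P_i=-\undim I_i$ and $\tau^{k}\undim P_i=-\undim(\tau^{k-1}I_i)$, together with careful tracking of the sign conventions for the defect under the normalization $\tau^{m}=\mathrm{id}+\delta(\cdot)\eta$.
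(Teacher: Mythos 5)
Your first step is correct, and it is precisely dual to the paper's argument: writing each preprojective root as $\tau^{-j}\undim P_i$ and using the isometry property of $\tau$ together with $\tau\undim P_i=-\undim I_i$, the condition $x\in H(\eta)\setminus D(\eta)$ is indeed equivalent to some \emph{forward} translate $\tau^sx$, $s\ge1$, having a negative coordinate. But the paper never needs to convert forward translates into backward ones: by Proposition \ref{prop: regions}(1), $D(\eta)=\cV_m\cap H(\eta)$, so $x\notin D(\eta)$ yields a preinjective $\beta\in\cI_m$ with $\brk{x,\beta}<0$, and writing $\tau^{-k}\beta=\undim I_i$ with $0\le k<m$ gives $f_i(\tau^{-k}x)_i=\brk{x,\beta}<0$ directly --- a \emph{backward} translate with a negative coordinate, with the explicit bound $k<m$. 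Your route instead stands or falls with the periodicity of $\tau$ on $H(\eta)$; that statement is true, but your justification of it has genuine gaps.

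Two concrete problems. First, the Dlab--Ringel identities as stated cannot ``hold on a spanning set'' for a single linear functional $\delta$: one identity is about $\tau^{-m}$ (preprojectives), the other about $\tau^{+m}$ (preinjectives), both with the same sign $+\delta(\gamma)\eta$, so relative to $\tau^m$ the defect enters the two families with opposite signs. Already for the Kronecker quiver no linear functional satisfies both lists: the formulas force $\delta=2$ on every root $(k,k+1)$ and $(k+1,k)$ (taking $m=1$), so constancy along each family forces $\delta(\eta)=0$ while additivity forces $\delta(\eta)=\delta\bigl((1,0)\bigr)+\delta\bigl((0,1)\bigr)=4$. This is repairable by using the preinjective roots alone, which already span $\RR^n$, giving $\tau^m=\mathrm{id}+d(\cdot)\,\eta$ for some linear functional $d$. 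Second, and more seriously, the step identifying $\ker d$ with $H(\eta)$ fails as described: pairing the relation $\tau^mx-x=d(x)\eta$ against $\undim P_i$ in the first argument gives $f_i\bigl((\tau^mx)_i-x_i\bigr)=d(x)f_i\eta_i$, which is just the $i$-th coordinate of the relation itself and carries no link between $d(x)$ and $\brk{x,\eta}$. An argument that does work: expand the isometry identity $\brk{\tau^mx,\tau^my}=\brk{x,y}$ using the relation and $\brk{\eta,\eta}=0$ to get $d(y)\brk{x,\eta}+d(x)\brk{\eta,y}=0$ for all $x,y$; choosing $y$ with $\brk{\eta,y}\neq0$ then shows $d=\lambda\brk{\,\cdot\,,\eta}$ with $\lambda\neq0$ (in fact $\lambda<0$, not positive as you claim --- harmless for the kernel, but a sign the computation was not carried through). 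Only with such an argument in place does $\tau^m$ fix $H(\eta)$ pointwise, making your forward-to-backward conversion valid; as written, the crucial step of your proof is unproved.
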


\begin{proof}
Since $x\notin D(\eta)=H(\eta)\cap \cV_m$, $x\notin \cV_m$. Therefore, there exists some preinjective $\beta\in\cI_m$ so that $\brk{x,\beta}<0$. By definition of $\cI_m$, there is some $0\le k<m$ so that $\tau^{-k}\beta$ is the dimension vector of some injective module $I_i$. Then
	\[
	\brk{x,\beta}=\brk{\tau^{-k}x,\undim I_i}=f_i(\tau^{-k}x)_i<0
	\]
where $f_i=\dim\End_\Lambda(I_i)>0$. So, $(\tau^{-k}x)_i<0$, proving the lemma.
\end{proof}

%%%%%%%%%%%%%%%%%%%%%%%%%%%%%%%%%%%%%%%%%%
%%%
%%%   Proof of the Main Theorem
%%%
%%%%%%%%%%%%%%%%%%%%%%%%%%%%%%%%%%%%%%%%%%

\section{Proof of the Main Theorem}

\begin{proof}[Proof of Proposition \ref{prop: X cap H is star-shaped}.]

By Lemma \ref{lem: regular cones}, the interior of $R(T)$ for any regular $T$ is disjoint from $\cV_m\setminus \cW_m$ and therefore from $D(\eta)=(\cV_m\setminus \cW_m)\cap H(\eta)$. So, $R(T)\cap H(\eta)$ is contained in the closure of $H(\eta)\setminus D(\eta)$ and the same holds for $Y(\Lambda)=\bigcup_{T\in\Reg(\Lambda)} P(T)$ (where $P(T)=R(T)\cap H(\eta)$). So, it suffices to show that $H(\eta)\setminus D(\eta)$ is contained in the closed set $X(\Lambda)$. Let $x\in H(\eta)\setminus D(\eta)$.

By Lemma \ref{lem: claim 1} there is some $k\ge 0$ so that $\tau^{-k}x$ has a coordinate $\le0$. Take the minimal such $k$. Since $D(\eta)$ and $H(\eta)$ are invariant under $\tau$, $\tau^{-k}x\in H(\eta)\setminus D(\eta)$. By Lemma \ref{lem: point in H(eta) with neg coord are in a reg cluster}, $\tau^{-k}x\in R(T)$ for some regular cluster tilting object $T$. If $k=0$ we are done. So, suppose $k>0$. We claim that $\tau^kT$ is a regular cluster and $x\in R(\tau^kT)$. We use the fact that $\tau R(T)=R(\tau T)$ if $T$ has no projective summands.\\
We claim that $\tau^jT$ has no projective summands for $0\le j<k$. Suppose the claim is false, and take $j\ge0$ minimal so that $\tau^jT$ has a projective summand $P_i$. Then, for $0\le p=k-j-1<k$, $\tau^{-p-1}x \in R(\tau^jT)$. This implies that $x$ is a nonnegative linear combination of ext-orthogonal roots including $\undim P_i$. So, $\brk{\tau^{-p-1}x,\undim P_i}\ge0$. Since $\tau$ is an isometry and $\tau P_i=I_i[1]$, we have
	\[
	\brk{\tau^{-p}x,\undim \tau P_i}=\brk{\tau^{-p}x,-\undim I_i}=-f_i(\tau^{-p}x)_i\ge0
	\]
which implies that $(\tau^{-p}x)_i\le0$. This contradicts the minimality of $k$ since $p<k$. Thus $\tau^kT$ is a regular cluster and $x\in R(\tau^kT)$.
\end{proof}

\begin{thm}\label{thm: polygon theorem}
For any tame algebra $\Lambda$, any two maximal green sequences can be deformed into each other by polygonal moves.
\end{thm}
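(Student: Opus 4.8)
The plan is to assemble the structural results of the previous two sections; no new computation is needed. Maximal green sequences are exactly the elements of $\cM G(\Lambda[1],\Lambda)$, so the goal is to show that any two of them, say $S$ and $S'$, lie in a common polygonal deformation class.

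First I would reduce to comparing regular clusters. By Proposition \ref{prop: every MGS goes through a regular T}, each of $S$ and $S'$ passes through at least one regular cluster tilting object; fix regular clusters $T$ and $T'$ with $S$ through $T$ and $S'$ through $T'$. By Proposition \ref{prop: all MGS though T are equivalent}(3), $S$ is polygonally equivalent to every maximal green sequence through $T$, and likewise $S'$ to every one through $T'$. Thus it suffices to exhibit a chain of polygonal equivalences linking some maximal green sequence through $T$ to one through $T'$.

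Next I would invoke connectedness. By Proposition \ref{prop: Reg is connected} the graph $\Reg(\Lambda)$ is connected, so there is a path $T=R_0,R_1,\dots,R_\ell=T'$ of regular clusters in which consecutive terms $R_i,R_{i+1}$ differ by a single mutation. Since any mutation is either green or red (\cite{BDP}), $R_i$ and $R_{i+1}$ are comparable; after relabeling assume $R_i\le R_{i+1}$. As both are regular, Proposition \ref{prop: all MGS though T are equivalent}(1) makes $\cM G(\Lambda[1],R_i)$ and $\cM G(R_{i+1},\Lambda)$ nonempty, and the relation $R_i\le R_{i+1}$ supplies a green sequence from $R_i$ to $R_{i+1}$; concatenating these three segments produces a single maximal green sequence passing through both $R_i$ and $R_{i+1}$. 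Applying Proposition \ref{prop: all MGS though T are equivalent}(3) at each endpoint, every maximal green sequence through $R_i$ is polygonally equivalent to this common sequence, and hence to every maximal green sequence through $R_{i+1}$.

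Finally I would chain these equivalences along $R_0,\dots,R_\ell$ to conclude that a maximal green sequence through $T$ is polygonally equivalent to one through $T'$, whence $S$ and $S'$ lie in the same class. The genuinely hard part has already been discharged: the star-shapedness argument behind Proposition \ref{prop: Reg is connected} is the crux of the whole matter. The only point demanding care in this final assembly is the ``common maximal green sequence'' step for a comparable pair of regular clusters, namely verifying that the concatenation of green segments is indeed a single maximal green sequence; this is precisely where regularity is used, since it guarantees that $\cM G$ is nonempty on both sides of each step.
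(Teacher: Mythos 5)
Your proposal is correct and follows essentially the same route as the paper: every maximal green sequence passes through a regular cluster (Proposition \ref{prop: every MGS goes through a regular T}), all maximal green sequences through a fixed regular cluster are polygonally equivalent (Proposition \ref{prop: all MGS though T are equivalent}), and connectedness of $\Reg(\Lambda)$ (Proposition \ref{prop: Reg is connected}) chains these classes together. Your careful verification of the linking step for two adjacent regular clusters---building a single maximal green sequence through both by concatenating green segments---is exactly the ``intermediate step'' the paper asserts at the start of its Section 3, just spelled out in more detail.
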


\begin{proof}
By Proposition \ref{prop: every MGS goes through a regular T} every maximal green sequence goes through a regular cluster $T$. Any two maximal green sequences which go through the same regular $T$ are polygonally equivalent by Proposition \ref{prop: all MGS though T are equivalent}. By Proposition \ref{prop: Reg is connected} the graph $\Reg(\Lambda)$ of all regular clusters is connected. This implies that any maximal green sequence going though one regular cluster $T$ will be polygonally equivalent to any maximal green sequence going through any other regular cluster $T'$. Therefore, any two maximal green sequences for $\Lambda$ are polygonally equivalent.
\end{proof}

\begin{cor}
The No Gap Conjecture holds for any simply laced quiver of tame type.
\end{cor}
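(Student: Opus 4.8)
The plan is to derive this immediately from the main theorem (Theorem \ref{thm: polygon theorem}) together with the combinatorial constraint on polygon sizes recorded in Remark \ref{rem: 4 or 5}. Recall that the No Gap Conjecture asserts that if $\Lambda$ admits maximal green sequences of lengths $a\le b$, then for every integer $\ell$ with $a\le \ell\le b$ there is a maximal green sequence of length exactly $\ell$.

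First I would fix two maximal green sequences $C$ and $C'$ of lengths $a\le b$. By Theorem \ref{thm: polygon theorem}, $C$ and $C'$ lie in the same polygonal deformation class, so there is a finite chain of maximal green sequences $C=C_0,C_1,\dots,C_N=C'$ in which each $C_{i+1}$ is obtained from $C_i$ by a single elementary polygonal deformation (Definition \ref{def: polygonal deformation}).

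The key point is to control the change in length under one such deformation. Because $\Lambda$ is simply laced, Remark \ref{rem: 4 or 5} guarantees that every polygon in the oriented exchange graph has either $4$ or $5$ sides. A four-sided polygon decomposes into two chains of green mutations each of length $2$, so replacing one side by the other leaves the total length of the ambient maximal green sequence unchanged; a five-sided polygon decomposes into chains of lengths $2$ and $3$, so the replacement alters the total length by exactly $1$. Hence each elementary polygonal deformation changes the length of a maximal green sequence by $0$ or $1$, and the lengths $\lvert C_0\rvert,\lvert C_1\rvert,\dots,\lvert C_N\rvert$ form a finite sequence of integers whose consecutive terms differ by at most one.

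Finally I would invoke the discrete intermediate value theorem: a finite sequence of integers whose consecutive terms differ by at most one takes every integer value between its endpoints. Applying this to $\lvert C_0\rvert=a$ and $\lvert C_N\rvert=b$ shows that every integer in the interval $[a,b]$ occurs as the length of some $C_i$, and hence as the length of some maximal green sequence. This is exactly the No Gap Conjecture. I do not anticipate a genuine obstacle here: all of the substance lies in Theorem \ref{thm: polygon theorem} and in the geometric analysis of the regular cluster fan preceding it. The only points requiring care are verifying the length bound of $0$ or $1$ through the simply-laced classification of polygons, and observing that the deformation chain is finite so that the elementary intermediate-value argument applies.
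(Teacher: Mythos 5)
Your proposal is correct and is essentially the paper's own (implicit) argument: the paper derives this corollary directly from Theorem \ref{thm: polygon theorem} combined with the observation, recorded in Remark \ref{rem: 4 or 5} and spelled out in the proof of Theorem \ref{thm: no gap for finite type}, that in the simply laced case every elementary polygonal deformation changes the length of a maximal green sequence by $0$ or $1$, whence a discrete intermediate value argument closes the gap. You have merely made explicit the finite deformation chain and the intermediate value step, which the paper leaves to the reader.
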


As observed in \cite{BDP} one useful application of the No Gap Conjecture is the ability to compute the length of the longest maximal green sequence for a tame algebra. Recall from \cite{BDP} that the \emph{empirical maximum length} for maximal green sequences for $\Lambda$ is the smallest positive integer $\ell_0$ so that the number of maximal green sequences of length $\le \ell_0$ is positive and equal to the number of maximal green sequences of length $\le \ell_0+1$. When the No Gap Conjecture holds this number must be equal to the length of the longest maximal green sequence since the existence of a longer maximal green sequence would produce a gap, namely $\ell_0+1$, in the sequence of lengths of maximal green sequences. In particular, for the quiver $\widetilde{A}_{n,1}$, with $n$ arrows going clockwise and one arrow going counterclockwise for $n\le7$, the maximum length of a maximal green sequence is equal to $n(n+3)/2$ by a calculation of \cite{BDP}. This has been shown directly for all $n$ in \cite{Kase}.

\section*{Acknowledgements}

The authors would like to thank Thomas Br\"ustle and Gordana Todorov for many long discussions on the subject of maximal green sequences. This paper relies heavily on our previous joint work \cite{BHIT}. Additionally, Gordana Todorov gave the authors helpful guidance on the proof of the key Proposition \ref{prop: X cap H is star-shaped}. Ying Zhou and Hugh Thomas also contributed to this paper by indirectly giving the authors ideas on how to shorten the proof. In particular, Hugh Thomas explained in his lecture at Sherbrooke in September, 2015 how the Cambrian lattice is a lattice quotient of the weak Bruhat order. However, the crucial idea for this paper comes from Alexander Garver and Thomas McConville \cite{GM} and the authors thank Al Garver in particular for explaining his ideas about the No Gap Conjecture.

%%%%%%%%%%%%%%%%%%%%%%%%%%%%%%%%%%%%%%%%%%
%%%
%%%   Bibliography
%%%
%%%%%%%%%%%%%%%%%%%%%%%%%%%%%%%%%%%%%%%%%

\bibliographystyle{amsplain}
\bibliography{BibliographyBibTeX}

\providecommand{\bysame}{\leavevmode\hbox to3em{\hrulefill}\thinspace}
\providecommand{\MR}{\relax\ifhmode\unskip\space\fi MR }
% \MRhref is called by the amsart/book/proc definition of \MR.
\providecommand{\MRhref}[2]{%
  \href{http://www.ams.org/mathscinet-getitem?mr=#1}{#2}
}
\providecommand{\href}[2]{#2}
\begin{thebibliography}{10}

\bibitem{ACCERV}
Murad Alim, Sergio Cecotti, Clay C{\'o}rdova, Sam Espahbodi, Ashwin Rastogi,
  and Cumrun Vafa, \emph{{BPS} quivers and spectra of complete ${N}=2$ quantum
  field theories}, Communications in Mathematical Physics \textbf{323} (2013),
  no.~3, 1185|1227.

\bibitem{BMR}
Aslak Bakke~Buan, Robert Marsh, and Idun Reiten, \emph{Cluster-tilted
  algebras}, Transactions of the American Mathematical Society \textbf{359}
  (2007), no.~1, 323|332.

\bibitem{BDP}
Thomas Br{\"u}stle, Gr{\'e}goire Dupont, and Matthieu P{\'e}rotin, \emph{On
  maximal green sequences}, International Mathematics Research Notices
  \textbf{2014} (2014), no.~16, 4547|4586.

\bibitem{BHIT}
Thomas Br{\"u}stle, Stephen Hermes, Kiyoshi Igusa, and Gordana Todorov,
  \emph{Semi-invariant pictures and two conjectures on maximal green
  sequences}, arXiv preprint arXiv:1503.07945 (2015).

\bibitem{BMRRT}
Aslak~Bakke Buan, Robert Marsh, Markus Reineke, Idun Reiten, and Gordana
  Todorov, \emph{Tilting theory and cluster combinatorics}, Advances in
  Mathematics \textbf{204} (2006), no.~2, 572|618.

\bibitem{DW}
Harm Derksen and Jerzy Weyman, \emph{Semi-invariants of quivers and saturation
  for {L}ittlewood-{R}ichardson coefficients}, Journal of the American
  Mathematical Society \textbf{13} (2000), no.~3, 467|479.

\bibitem{DR}
Vlastimil Dlab and Claus~Michael Ringel, \emph{Indecomposable representations
  of graphs and algebras}, vol. 173, American Mathematical Soc., 1976.

\bibitem{FZII}
Sergey Fomin and Andrei Zelevinsky, \emph{Cluster algebras {II}: {F}inite type
  classification}, Inventiones Mathematicae \textbf{154} (2003), no.~1, 63|121.

\bibitem{FZ}
\bysame, \emph{Cluster algebras {IV}: {C}oefficients}, Compositio Mathematica
  \textbf{143} (2007), no.~01, 112|164.

\bibitem{GM}
Alexander Garver and Thomas McConville, \emph{Lattice properties of oriented
  exchange graphs and torsion classes}, arXiv preprint arXiv:1507.04268 (2015).

\bibitem{Tamari}
Samuel Huang and Dov Tamari, \emph{Problems of associativity: {A} simple proof
  for the lattice property of systems ordered by a semi-associative law},
  Journal of Combinatorial Theory, Series A \textbf{13} (1972), no.~1, 7|13.

\bibitem{IOTW2}
Kiyoshi Igusa, Kent Orr, Gordana Todorov, and Jerzy Weyman, \emph{Modulated
  semi-invariants}, arXiv preprint arXiv:1507.03051v2 (2015).

\bibitem{IRTT}
Osamu Iyama, Idun Reiten, Hugh Thomas, and Gordana Todorov, \emph{Lattice
  structure of torsion classes for path algebras}, arXiv preprint
  arXiv:1312.3659 (2013).

\bibitem{Kase}
Ryoichi Kase, \emph{Remarks on lengths of maximal green sequences for quivers
  of type $\widetilde{A}_{n,1}$}, arXiv preprint arXiv:1507.02852 (2015).

\bibitem{Keller}
Bernhard Keller, \emph{Quiver mutation and combinatorial {DT}-invariants},
  FPSAC, vol.~13, 2013, p.~9|20.

\bibitem{M}
Greg Muller, \emph{The existence of a maximal green sequence is not invariant
  under quiver mutation}, arXiv preprint arXiv:1503.04675 (2015).

\bibitem{NZ}
Tomoki Nakanishi and Andrei Zelevinsky, \emph{On tropical dualities in cluster
  algebras}, Contemp. Math \textbf{565} (2012), 217|226.

\bibitem{R}
Nathan Reading, \emph{Universal geometric cluster algebras}, Mathematische
  Zeitschrift \textbf{277} (2014), no.~1-2, 499|547.

\bibitem{R1X}
\bysame, \emph{Lattice theory of the poset of regions}, Lattice Theory:
  Selected Topics and Applications \textbf{2} (to appear).

\bibitem{S}
Aidan Schofield, \emph{General representations of quivers}, Proceedings of the
  London Mathematical Society \textbf{3} (1992), no.~1, 46|64.

\bibitem{X}
Dan Xie, \emph{{BPS} spectrum, wall crossing and quantum dilogarithm identity},
  arXiv preprint arXiv:1211.7071 (2012).

\end{thebibliography}

%\begin{thebibliography}{aa}
%\input{Bibliography}
%\end{thebibliography}

\end{document}